\RequirePackage{ifpdf}
\ifpdf 
\documentclass[pdftex]{mfat}
\else
\documentclass{mfat}
\fi
\usepackage[all,2cell]{xy}  \UseAllTwocells \SilentMatrices
\usepackage{adjustbox}


\newtheorem{lem}{Lemma}[section]

 { \theoremstyle{definition}

 }



\begin{document}	

\title[Geometric Regularity Results on $B_{\alpha,\beta}^{k}$-Manifolds,
I: Affine Connections] 
	  {Geometric Regularity Results on $B_{\alpha,\beta}^{k}$-Manifolds,
I: Affine Connections} 

\author[Yuri Ximenes Martins]{Yuri X. Martins}
\email[Yuri Ximenes Martins]{\href{mailto:yurixm@ufmg.br}{yurixm@ufmg.br}}
\address{Instituto de Ci\^encias Exatas, Universidade Federal de Minas Gerais, Belo Horizonte, Brazil}
\thanks{Y. X. M is supported by CAPES, grant number 88887.187703/2018-00.}

\author[Rodney Josu\'e Biezuner]{Rodney J. Biezuner}
\email[Rodney Josu\'e Biezuner]{\href{mailto:rodneyjb@ufmg.br}{rodneyjb@ufmg.br}}
\address{Instituto de Ci\^encias Exatas, Universidade Federal de Minas Gerais, Belo Horizonte, Brazil}

\subjclass[2020]{58A05, 53C05, 46M15} 
\date{dd/mm/yyyy; Revised dd/mm/yyyy}
\keywords{Affine Connections, Regularity Structures, Presheaves of Fr\'echet Spaces, Existence and Multiplicity}

\begin{abstract}
	In this paper we consider existence and multiplicity results concerning affine connections on $C^{k}$-manifolds $M$ whose coefficients are as regular as one needs, following the regularity theory introduced in \cite{eu_1}.  We show that if $M$ admits a $B_{\alpha,\beta}^{k}$-structure, then the existence of such regular connections can be established in terms of properties of the structural presheaf $B$. In other words, we propose a solution to the existence problem in this setting. With regard to the multiplicity problem, we show that the space of regular affine connections is an affine space of the space of regular $\operatorname{End}(TM)$-valued 1-forms, and that if two regular connections are locally additively different, then they are actually locally different. The existence of a topology in which the space of regular connections is a nonempty open dense subset of the space of all regular $\operatorname{End}(TM)$-valued 1-forms is suggested.
\end{abstract}

\maketitle
\thispagestyle{fancy}

\section{Introduction \label{sec_introduction}}
In a naive sense, a ``regular'' mathematical object in one which is described by functions. The space where these structural functions live is the ``regularity'' of the defined mathematical object. Thus, $n$-manifolds $M$ are ``regular'' because they are described by its charts $\varphi_i:U_i\rightarrow \mathbb{R}^n$.  Furthermore, its regularity is given by the space where the transition functions $\varphi_{ji} =\varphi_j\circ \varphi^{-1}_i$ live. Similarly, tensors $T$, affine connections $\nabla$ and partial differential operators $D$ in $M$ are also ``regular'', because they are described by their local coefficients.

In \cite{eu_1} the authors proposed a formalization to this general notion of ``regularity'' and begin its study in the context of $C^k$-manifolds. In particular, we considered the existence problem of subatlases satisfying prescribed regularity conditions. In the present work we will continue this study by considering existence and multiplicity problems for regular affine connections on regular $C^k$-manifolds.

Such problems are of wide interest in many areas. For instance, in gauge theory and when one needs certain
uniform bounds on the curvature, it is desirable to work with connections
$\nabla$ whose coefficients are not only $C^{k-2}$, but actually
$L^{p}$-integrable or even uniformly bounded \cite{gauge,bounded_geometry_1}.
In other situations, as in the study of holomorphic geometry, one
considers holomorphic or Hermitian connections over complex manifolds 
whose coefficients are holomorphic \cite{holomorphic_1,atiyah_class}.
If $M$ has a $G$-structure, one can also consider $G$-principal
connections $\omega$ on the frame bundle $FM$, which induce an affine
connection $\nabla_{\omega}$ in $M$, whose coefficients $\omega_{b}^{c}(\partial_{a})=\Gamma_{ab}^{c}$
are such that for every $a$, $\Gamma_{a}\in C^{k-2}(U)\otimes\mathfrak{g}$,
where $(\Gamma_{a})_{cb}=\Gamma_{ab}^{c}$ \cite{kobayashi}. Interesting
examples of these are the symplectic connections, which are specially
important in formal deformation quantization of symplectic manifolds
\cite{symplectic_connection,deformation_quantization}. As a final
example, a torsion-free affine connection defines a $L_{3}$-space
structure in a smooth manifold $M$ (in the sense of \cite{L3_space})
iff its coefficients satisfy a certain system of partial differential
equations \cite{L3_space_2}. Thus, the theory of $L_{3}$-spaces is about certain regular connections.

In the formulation of ``regular'' object of \cite{eu_1}, the spaces describing the regularity are modeled by nuclear Fr\'echet spaces, since their category $\mathbf{NFre}$ is a closed symmetric monoidal relatively to the projective tensor project $\otimes$, and since most of the function spaces arising from geometry belongs to $\mathbf{NFre}$.   Furthermore, since one such space for every open set $U\subset M$, this leads to consider presheaves $B:\operatorname{Open}(\mathbb{R}^n)^{op}\rightarrow \mathbf{NFre}$. On the other hand, notice that if a mathematical concept is described by $C^k$-functions $f$, then the regularity of this object is typically described not only by the space where the functions $f$ live, but also by the space where their derivatives $\partial^i f$, $1\leq i\leq k$, belong. Thus, we should consider sequences of presheaves $B_i:\operatorname{Open}(\mathbb{R}^n)^{op}\rightarrow \mathbf{NFre}$, with $i=0,...,k$. But, for reasons which will be more clear later (see Remark \ref{important_remark}), it is better to allow sequences $B_i$ indexed in more general sets $\Gamma$. These are called \textit{presheaves of $\Gamma$-spaces}. 
\begin{example}
The prototypical example of $[0,k]$-presheaf, where $[0,k]={0,...,k}$, is the sequence $C^{k-i}$. If $\beta:\Gamma \rightarrow [0,k]$ is any function we get the $\Gamma$-presheaf $C^{k-\beta(i)}$. One can also consider the sequences $L^i(U)$, $W^{i,p}(U)$, and so on.
\end{example}
We would also like to sum and multiply in the regularity spaces in a distributive way. We will consider presheaves of \textit{distributive $\Gamma$-spaces}, which are presehaves of $\Gamma$-spaces endowed with a distributive structure. On the other hand, recall that we will be interested in regularity structures on $C^k$-objets. Thus, we need some way to make sense of the intersections $B_{\alpha(i)}(U)\cap C^{k-\beta(i)}(U)$, where the index $\alpha$ is necessary to ensure that both are presheaves of $\Gamma$-spaces for the same $\Gamma$. This will be done with the help of an ambient category $\mathbb{X}$ with pullbacks (typically the category of $\Gamma$-graded real vector spaces) and of an ambient object $X$, in which both $B_i$ and $C^{k-\beta(i)}$ can be included, being the intersection $B_i\cap_X C^{k-\beta(i)}$ given by the pullbacks of the inclusions. In other words, we will need an \textit{intersection structure} between $B_i$ and $C^{k-\beta(i)}$, denoted by $\mathbb{X}$.

With all this stuff, we define a ``regular'' function $f:U\rightarrow \mathbb{R}$, here called \textit{$(B,k,\alpha,\beta)$-function in $\mathbb{X}$}, as a $C^k$-function such that $\partial^i f\in B_{\alpha(i)}(U)\cap_{X(U)}C^{k-\beta(i)}(U)$ for each $i$. Similarly, a ``regular'' $C^k$-manifold, called \textit{$(B^k_{\alpha,\beta},\mathbb{X})$-manifold} is one whose transition functions $\varphi_{ji}$ are regular functions, i.e., are $(B,k,\alpha,\beta)$-functions in $\mathbb{X}$. Finally, a ``regular'' affine connection in a $(B^k_{\alpha,\beta},\mathbb{X})$-manifold $M$ is a $C^k$ affine connection $\nabla$ whose coefficients $(\Gamma_\varphi)^{c}_{ab}$ in each regular chart $\varphi$ of $M$ are $(B,k,\alpha',\beta')$-functions in some other ambient $\mathbb{Y}$. We also say that $\nabla$ is an affine $(B^k_{\alpha',\beta'},\mathbb{Y})$-connection in $M$. 

In this new language, the existence and multiplicity problems for regular affine connections on regular manifolds have the following description:

\begin{itemize}
    \item \textbf{Existence.} Let $M$ be a $(B^k_{\alpha,\beta},\mathbb{X})$-manifolds. Given $\alpha'$ and $\beta'$ and $\mathbb{Y}$, can we find conditions on $B$, $k$, $\alpha$ and $\beta$ such that $M$ admits affine $(B^k_{\alpha',\beta'},\mathbb{Y})$-connections? 
    \item \textbf{Multiplicity.} What can be said about the dimension of the space $\operatorname{Conn}^k_{\alpha',\beta'}(M;\mathbb{Y)}$ of $(B^k_{\alpha',\beta'},\mathbb{Y})$-connections in $M$?
\end{itemize}

 We prove that arbitrarily regular connection exist in sufficiently regular manifolds. The will be based on four steps: 
\begin{enumerate}
    \item existence of arbitrarily regular affine connections in each $U\subset M$ (Proposition \ref{prop_local_existence});
    \item existence of global affine connections in $M$ whose coefficients $(\Gamma_\varphi)^c_{ab}$ are regular, but whose derivatives are not (Theorem \ref{thm_existence_connections});
    \item proving a Regularity Globalization Lemma, ensuring that the derivatives $\partial^i(\Gamma_\varphi)^c_{ab}$ can be made regular (Lemma \ref{key_lemma});
    \item putting together the previous steps to get a global regular connection (Theorem \ref{corollary_existence_connections}).
\end{enumerate}
 
 Concerning the multiplicity problem, in Proposition \ref{space_affine_connections_is_affine}  we show that
  the space of regular affine connections is an affine space of the space of regular $\operatorname{End}(TM)$-valued 1-forms, and in Theorem \ref{multiplicity_thm} we prove that if two regular connections are locally additively different, then they are locally different.

The paper is organized as follows. In Section \ref{sec_background}
we present the needed background for our main results. In parts, it contain concepts introduced in \cite{eu_1}, but now described in a more concrete way. Subsection  \ref{sec_globalization_lemma} and Subsection \ref{sec_globalization_lemma} are fully new in content if compared to \cite{eu_1}. In particular, in Subsection \ref{sec_globalization_lemma} the Regularity Globalization Lemma is stated and proved. Section \ref{sec_a_b_connections} deals with the existence problem and the four steps described above. Section \ref{sec_multiplicity} is about the multiplicity problem. The paper ends with an informal discussion of why we should believe that there exists a topology in which $\operatorname{Conn}^k_{\alpha',\beta'}(M;\mathbb{Y)}$ is a nonempty open dense subset of the space of tuples of $(B,k,\alpha',\beta')$-functions in $\mathbb{Y}$. 

\begin{itemize}
    \item \textbf{Convention}: remarks concerning notations and assumptions will be presented using a bullet.
\end{itemize} 

\section{Background \label{sec_background}}
In this section we will present all that is necessary to formally state and prove the existence and multiplicity results described in the introduction. We begin by recalling in detail the definitions and results from \cite{eu_1} that will be needed in the next sections, which are basically the notion of $B^k_{\alpha,\beta}$-manifold and the underlying concepts.
\begin{itemize}
    \item \textbf{Notations}. In the following $\Gamma$ denotes an arbitrary set, regarded as a set of indexes. Given $k,l\geq0$ with $k\leq l$,  $[k,l]$ will denote the integer closed interval from $k$ to $l$. $\mathbf{NFre}$ will denote the category of nuclear Fr\'echet spaces with continuous linear maps, and $\otimes$ is the projective tensor product, which makes $\mathbf{NFre}$ a symmetric monoidal category \cite{nuclear}.
\end{itemize}

\subsection{$\Gamma$-Spaces}
In this section we will recall what are distributive $\Gamma$-spaces. For a complete exposition, see Section 2 of \cite{eu_1}.
\begin{definition}
A \emph{$\Gamma$-space} is a family  $B=(B_i)_{i\in\Gamma}$ of  nuclear Fr\'echet spaces. A \textit{morphism} $f:B\Rightarrow B'$ between two $\Gamma$-spaces $B$ and $B'$ is a family $f_i:B_i\rightarrow B'_i$ of morphisms in $\mathbf{NFre}$,  with $i\in\Gamma$. Composition and identities are defined componentwise, so that we have a category $\mathbf{NFre}_{\Gamma}$. 
\end{definition}
\begin{example}\label{example_1_gamma_spaces}
The fundamental examples are the $[0,k]$-spaces given by $B_i=C^i(M;\mathbb{R}^m)$, where $M$ is a $C^k$-manifold, with the standard family of semi-norms
\begin{equation}\label{frechet_ck}
 \Vert f\Vert_{r,l}=\sum_{j}\sup_{\vert\mu\vert=r}\sup_{x\in K_{l}}\vert\partial^{\mu}f_{j}(x)\vert   
\end{equation}
with $0\leq r\leq i$ and $(K_{l})$ some nice sequence of compact
sets of $M$. More generally, given a set $\Gamma$ and a function $\beta:\Gamma\rightarrow [0,k]$, we can define the $\Gamma$-space $B_i=C^{k-\beta(i)}(M;\mathbb{R}^m)$.
\end{example}
\begin{example}
Similarly, we can consider in $C^{k-\beta(i)}(U;\mathbb{R})$ other Banach norms, as $L^p$-norms or Sobolev norms.
\end{example}
\begin{example}
More generally, any sequence of Banach spaces defines a $\Gamma$-space, where $\Gamma$ is the set in which the sequence is indexed.
\end{example}
\begin{example}\label{example_2_gamma_spaces}
Given a $\Gamma$-space $B$ and a function $\alpha:\Gamma' \rightarrow \Gamma$ we define a new $\Gamma'$-space $B_{\alpha}$ by reindexing $B$, i.e., $(B_\alpha)_i=B_{\alpha(i)}$.
\end{example}
\begin{definition}\label{definition_distributive_space}
A \emph{distributive structure} in a $\Gamma$-space $B$ consists of: 
\begin{enumerate}
    \item maps $\epsilon,\delta:\Gamma\times\Gamma \rightarrow\Gamma$;
    \item continuous linear maps $*_{ij}:B_{i}\otimes B_{j}\rightarrow B_{\epsilon(i,j)}$
and $+_{ij}:B_{i}\otimes B_{j}\rightarrow B_{\delta(i,j)}$, for every $i,j\in \Gamma$
\end{enumerate}
such that the following compatibility equations are satisfied for every $x\in B_{i}$, $y\in B_{j}$
and $z\in B_{k}$, and every $i,j,k\in\Gamma$ (notice that the first two equations, which describe left and right distributivity, respectively,  makes sense only because
of the last two).
\begin{eqnarray*}
x*_{i\delta(j,k)}(y+_{jk}z) & = & (x*_{ij}y)+(x*_{ik}z)\\
(x+_{ij}y)*_{\delta(i,j)k}z & = & (x*_{ik}z)+(y*_{jk}z)\\
\epsilon(i,\delta(j,k)) & = & \delta(\epsilon(i,j),\epsilon(i,k))\\
\epsilon(\delta(i,j),k) & = & \delta(\epsilon(i,k),\epsilon(j,k)).
\end{eqnarray*}
We also require that $+_{ii}$ coincides with the sum $+_i$ in $B_{i}$.
\end{definition}
\begin{definition}
A \emph{distributive $\Gamma$-space} is a $\Gamma$-space $B$ endowed with distributive structure $(\epsilon,\delta,+,*)$. A  \textit{morphism} between distributive $\Gamma$-spaces $(B,\epsilon,\delta,+,*)$ and $(B',\epsilon',\delta',+',*')$  is a pair $(f,\mu)$, where $\mu:\Gamma \rightarrow \Gamma$ is a function and $f=(f_i)_{i\in \Gamma}$ is a family of continuous linear maps $f_i:B_i\rightarrow B'_{\mu(i)}$ such that the following equations are satisfied (notice again that the first two equations, which describe left and right distributivity, respectively,  makes sense only because
of the last two):
\begin{eqnarray*}
f(x*_{ij}y)&=&f(x)*'_{\epsilon'(\mu(i),\mu(j))}f(y) \\ 
f(x+_{ij}y)&=&f(x)+'_{\delta'(\mu(i),\mu(j))}f(y) \\
\mu(\epsilon(i,j))&=& \epsilon'(\mu(i),\mu(j))\\
\mu(\delta(i,j))&=& \delta'(\mu(i),\mu(j)).
\end{eqnarray*}
\end{definition}
\begin{example}\label{example_distributive_gamma_spaces}
With pointwise sum and multiplication, the $\Gamma$-spaces $C^{k-\beta(i)}(M;\mathbb{R})$ are distributive for $\delta(i,j)=k-\max(\beta(i),\beta(j))=\epsilon(i,j)$. H\"older's inequality implies that the $\mathbb{Z}_{\geq 0}$-spaces $L^i(U)$ also are distributive, with $\delta(i,j)=\min(i,j)$ and $\epsilon(i,j)=i\star j= (i\cdot j)/(i+j)$, now viewed as a $\Gamma$-space for $\Gamma\subset \mathbb{Z}_{\geq 0}$ being the set of all $i,j\geq 0$ such that $i\star j$ is an integer. From Yong's inequality one can considers a different distributive structure in $L^i(\mathbb{R}^n)$ given by convolution product and such that $\epsilon(i,j)= r(i,j)=(i\cdot j)/(i+j-(i\cdot j))$, where now $\Gamma \subset \mathbb{Z}_{\geq 0}$ is such that $r(i,j)$ is an integer.
\end{example}
\begin{definition}
Let $B$ and $B'$ be a $\Gamma$-space and a $\Gamma'$-space, respectively. The \textit{external tensor product} between them is the $\Gamma\times\Gamma'$-space $B\otimes B'$ such that $(B\otimes B')_{i,j}=B_i\otimes B'_j$. 
\end{definition}
\begin{remark}\label{example_B_epsilon}
A distributive $\Gamma$-space $(B,\epsilon,\delta,+,*)$ induces two $(\Gamma\times\Gamma)$-spaces $B_\epsilon$ and $B_\delta$, defined by $(B_{\epsilon})_{ij}=B_{\varepsilon(i,j)}$ and $(B_{\delta})_{ij}=B_{\delta(i,j)}$. The family of maps $*_{ij}$ and $+_{ij}$ defining a distributive $\Gamma$-structure are actually morphisms of $\Gamma\times\Gamma$-spaces $*:B\otimes B\Rightarrow B_{\epsilon}$ and $+:B\otimes B\Rightarrow B_{\delta}$. 
\end{remark}

\subsection{Abstract Intersections}

 We now recall abstract intersection structures between presheaves of distributive $\Gamma$-spaces. For a complete exposition, see Section 2 of \cite{eu_1}.
 
As a motivation, recall that the intersection $A\cap B$ of vector subspaces $A,B\subset X$ of a fixed vector space $X$  can be viewed as the pullback between the inclusions $A\hookrightarrow X$ and $B \hookrightarrow X$. However, if $A$ and $B$ are arbitrary vector spaces, the ``abstract intersection'' $A\subset B$ is a priori not well-defined. In order to define it we first consider an ambient space $X$ in which both can be embedded and one fixes embeddings $\alpha:A\hookrightarrow X$ and $\beta:B\hookrightarrow X$. The \textit{abstract intersection} between $A$ and $B$ inside $X$, relative to the fixed embeddings $\alpha$ and $\beta$, is then defined  as the pullback $\operatorname{pb}(\alpha,\beta)$ between $\alpha$ and $\beta$.

Notice that if the category $\mathbf{C}$ in which the objects $A$ and $B$ live does not have pullbacks, then the above notion of abstract intersection may not exist for certain triples $(X,\alpha,\beta)$. In these cases we first need to regard both $A$ and $B$ as objects of another category $\mathbf{X}$ with pullbacks. Thus, we take a faithful functor $\gamma:\mathbf{A}\rightarrow \mathbf{X}$, a fixed ambient object $X\in \mathbf{X}$ and fixed monomorphisms $\alpha:\gamma_{\mathbf{A}}(A)\hookrightarrow X$ and $\alpha:\gamma(A)\hookrightarrow X$ $\alpha:\gamma(B)\hookrightarrow X$. The abstract intersection can then be defined as the pullback $\operatorname{pb}(\alpha,\beta)$ between $\alpha$ and $\beta$, but now it depends additionally on the faithful functor $\gamma$. Furthermore, a priori it exists only as an object of $\mathbf{X}$, which is why we call it ``abstract''.

\begin{definition}
A \emph{$\Gamma$-ambient} is a tuple $\mathcal{X}=(\mathbf{X},\gamma,\gamma_{\Sigma})$,
where $\mathbf{X}$ is a category with pullbacks and $\gamma$ and
$\gamma_{\Sigma}$ are faithful functors $\gamma:\mathbf{NFre}_{\Gamma}\rightarrow\mathbf{X}$
and $\gamma_{\Sigma}:\mathbf{NFre}_{\Gamma\times\Gamma}\rightarrow\mathbf{X}$. We say that a $\Gamma$-ambient $\mathcal{X}$ \textit{has null objects} if the ambient category $\mathbf{X}$ has a null object $0$.
\end{definition}
\begin{example}\label{standard_ambient}
As discussed in \cite{eu_1}, the typical example of $\Gamma$-ambient is such that $\mathbb{X}$ is the category $\mathbf{Vec}_{\mathbb{R},\Gamma\times \Gamma}$ of $\Gamma\times \Gamma$-graded real vector spaces, $\gamma_\Sigma: \mathbf{NFre}_{\Gamma\times\Gamma}\rightarrow \mathbf{Vec}_{\mathbb{R},\Gamma\times \Gamma}$ is the forgetful functor and $\gamma:\mathbf{NFre}_{\Gamma}\rightarrow \mathbf{Vec}_{\mathbb{R},\Gamma\times \Gamma}$ is the composition of $\Gamma_\Sigma$ with the inclusion $\mathbf{NFre}_{\Gamma}\hookrightarrow \mathbf{NFre}_{\Gamma\times \Gamma}$ given by $\imath(B)_{i,j}=B_i\oplus 0$. 
\end{example}
\begin{example}\label{vectorial_ambient}
More generally, it is interesting to consider the class of \textit{vector $\Gamma$-ambients} \cite{eu_1}. These are such that $\mathbf{X}=\mathbf{Vec}_{\mathbb{R},\Gamma\times\Gamma}$, $\gamma_{\Sigma}$ creates null objects and $\gamma=\gamma_{\Sigma}\circ F$, where $F$ is another functor that creates null objects. 
\end{example}
\begin{definition}
Let $B,B'$ be two $\Gamma$-spaces and let $\mathcal{X}=(\mathbf{X},\gamma,\gamma_{\Sigma})$ be a $\Gamma$-ambient.  An \textit{intersection structure} between $B$ and $B'$ in $(\mathbf{X},\gamma,\gamma_{\Sigma})$ is given by an object $X\in\mathbf{X}$ and monomorphisms $\imath:\gamma(B)\hookrightarrow X$ and $\imath':\gamma(B')\hookrightarrow X$. We write $\mathbb{X}$ in order to denote $(X,\imath,\imath')$ and we say that $X$ is the \textit{ambient object} of $\mathbb{X}$.
\end{definition}

Notice that in the definition above the functor $\gamma_{\Sigma}$, which is part of the $\Gamma$-ambient, was not used yet. It will be used when taking distributive structures into account.
\begin{definition}
Let $(B,\epsilon,\delta,+,*)$ and $(B',\epsilon',\delta',+',*')$ be distributive $\Gamma$-spaces. An \textit{intersection structure} between them, denoted by $\mathbb{X}$, consists of\footnote{Recall the definitions of $B_\epsilon$ and $B_\delta$ in Remark \ref{example_B_epsilon}.} \begin{enumerate}
    \item a $\Gamma$-ambient $\mathcal{X}=(\mathbf{X}, \gamma,\gamma_{\Sigma})$;
    \item an intersection structure $\mathbb{X}_0=(X,\imath,\imath')$ between $B$ and $B'$;
    \item an intersection structure $\mathbb{X}_{*}=(X_{*},\jmath_{*},\jmath_{*}')$ between $B_\epsilon$ and $B'_{\epsilon'}$; 
    \item an intersection structure $\mathbb{X}_{+}=(X_{+},\jmath_{+},\jmath_{+}')$ between $B_\delta$ and $B'_{\delta'}$.
\end{enumerate}
The \textit{abstract intersection} between $(B,\epsilon,\delta,+,*)$ and $(B',\epsilon',\delta',+',*')$ in $\mathcal{X}$, relatively to $\mathbb{X}$, is:
\begin{enumerate}
    \item the abstract intersection $\operatorname{pb}(X,\imath,\imath';\mathcal{X},\mathbb{X}_0)$ between $B$ and $B'$;
    \item the abstract intersection $\operatorname{pb}(X_*,\jmath_{*},\jmath'_{*};\mathcal{X},\mathbb{X}_{*})$ between $B_{\epsilon}$ and $B'_{\epsilon'}$;
    \item the abstract intersection $\operatorname{pb}(X_+,\jmath_{+},\jmath'_{+};\mathcal{X},\mathbb{X}_+)$ between $B_{\delta}$ and $B'_{\delta'}$;
    \item the additional pullbacks between $\gamma_\Sigma(*) \circ\jmath_{*}$ and $\gamma_\Sigma(*') \circ\jmath'_{*}$, and between $\gamma_\Sigma(+) \circ\jmath_{+}$ and $\gamma_\Sigma(+') \circ\jmath'_{+}$, as below\footnote{We presented only the first of these pullbacks, since the second one is fully analogous.},  describing the abstract intersections between the multiplications $*$ and $*'$, and the sums $+$ and $+'$, respectively\footnote{Recall from Remark \ref{example_B_epsilon} that $*$ and $+$ can be viewed as morphisms defined on external tensor products.}, in $\mathcal{X}$ and relatively to $\mathbb{X}$. 
\end{enumerate}
\end{definition}
\begin{equation}\label{diagram_abstract_intersection}
    \xymatrix{\operatorname{pb}(\gamma_\Sigma(*) \circ\jmath_{*},\gamma_\Sigma(*') \circ\jmath'_{*}) \ar[rr] \ar[dd]  && \gamma _{\Sigma} (B' \otimes B') \ar[d]^{\gamma _{\Sigma}(*')} \\
 & \operatorname{pb}(\jmath_*,\jmath'_{*}) \ar[d] \ar[r]  & \gamma _{\Sigma}(B'_{\epsilon})   \ar@{^(->}[d]^{\jmath_{*} '} \\
  \gamma _{\Sigma} (B \otimes B) \ar[r]_{\gamma _{\Sigma} (*)} & \gamma _{\Sigma}(B_{\epsilon})  \ar@{^(->}[r]_-{\jmath_{*}}  & X_*}
\end{equation}
\begin{itemize}
    \item The abstract intersections between $*$ and $*'$, and between $+$ and $+'$, as above, will be denoted simply by $\operatorname{pb}(*,*';\mathcal{X},\mathbb{X}_*)$ and $\operatorname{pb}(+,+';\mathcal{X},\mathbb{X}_+)$, respectively.
\end{itemize}

\begin{example}
The abstract intersection usually depends strongly on the ambient object $X$. E.g, in the $\Gamma$-ambient of Example \ref{standard_ambient}, if $X=\gamma(B)\oplus\gamma(B')$, then $\operatorname{pb}(X,\imath,\imath';\mathcal{X},\mathbb{X})\simeq 0$ independently of $\imath$ and $\imath'$. On the other hand, if $X=\operatorname{span}(\gamma(B)\cup\gamma(B'))$, then the abstract intersection is a nontrivial graded vector space if $B_i\cap B'_i$ is nontrivial for some $i\in\Gamma$. See Examples 12-13 of \cite{eu_1}. 
\end{example}

\begin{remark}
In typical examples, if there is some $i\in \Gamma$ such that $B_i$ or $B'_i$ is a nontrivial vector space, then the abstract intersection between $B$ and $B'$ is nontrivial too. This depends on how we ``concretify'' the abstract intersections: see Lemma \ref{vectorial_intersection}.
\end{remark}

\subsection{Concretization}
Here we will recall the concretization procedures for abstract intersection structures. See Section 2 of \cite{eu_1}.

In the last section we described how we can intersect two objects $A,B\in \mathbf{C}$ in a category without pullbacks by means of considering spans in an ambient category $\mathbf{X}$ with pullbacks via a faithful functor $\gamma:\mathbf{C}\rightarrow \mathbf{X}$. The resulting abstract intersection, however, exists a priori only in $\mathbf{X}$, but in many situations we actually need to work with it in $\mathbf{C}$. Thus we need some ``concretization'' procedure. An obvious approach would be to require that $\gamma:\mathbf{C}\rightarrow \mathbf{X}$ reflects pullbacks, but this would assume that $\mathbf{C}$ has pullbacks, which is precisely what we are avoiding. One could also assume the presence of an opposite directed functor $F:\mathbf{X}\rightarrow \mathbf{C}$ which preserves pullbacks, but again this assumes that $\mathbf{C}$ has pullbacks. A final obvious attempt would be consider only flat functors $\gamma:\mathbf{C}\rightarrow \mathbf{X}$, but this is too restrictive for our purposes. The exact condition that we need is the following\footnote{One can actually work in a more general setting, as those discussed in \cite{eu_1}. However, the present conditions are sufficient for essentially everything, including our main results.}:

\begin{definition}
 Let $\gamma:\mathbf{C}\rightarrow \mathbf{X}$ be a faithful functor and $X\in \mathbf{X}$. A \textit{concreteness structure} for $\gamma$ in $X$ is given by a set $W_{\gamma}(A;X)$ of morphisms $\gamma(A)\rightarrow X$ for each $A\in \mathbf{C}$. Let 

 $$W_{\gamma}(X)=\bigcup_{A\in \mathbf{C}} W_{\gamma}(A;X).$$
 We say $\gamma$ satisfies the \textit{concreteness property} in the concreteness structure $W_{\gamma}(X)$ if for every pair $f:\gamma(A)\hookrightarrow X$ and $f':\gamma(B)\hookrightarrow X$ in $W_{\gamma}(X)$ there exist:
 \begin{enumerate}
     \item an object $A\cap_{X,\gamma}B\in\mathbf{C}$ and an isomorphism $u:\gamma(A\cap_{X,\gamma}B)\simeq \operatorname{pb}(f,f')$;
     \item morphisms $\theta_1:A\cap_{X,\gamma}B \rightarrow A$ and $\theta_2:A\cap_{X,\gamma}B \rightarrow B$,
 \end{enumerate}
such that the diagram below commutes. 
\begin{equation}\label{diagram_concreteness}
\xymatrix{\gamma (A\cap_{X,\gamma}B) \ar@/_{0.3cm}/[ddr]_{\gamma(\theta_1)} \ar[rd]^{u}_{\simeq} \ar@/^{0.3cm}/[rrd]^{\gamma(\theta_2)} \\
& \ar[d]_-{\pi_1} \operatorname{pb}(f,f') \ar[r]^-{\pi_2} & \gamma(B) \ar[d]^{f'} \\
& \gamma(A) \ar[r]_-{f}  & X}    
\end{equation}
\end{definition}
\begin{remark}
The cospan in $\mathbf{C}$ defined by $(\theta_1,\theta_2)$ is called \textit{concrete cospan} of the span $(f,f')$. Thus, the concreteness property allow us to replace a span in $\mathbf{X}$ by a not necessarily universal cospan (i.e., not necessarily a limit) in $\mathbf{C}$.
\end{remark}
\begin{lem}\label{lemma_monomorphisms}
If $\gamma$ satisfies the concreteness property in $W_{\gamma}(X)$ and if $f$ and $f'$ are monomorphisms, then the corresponding $\theta_1$ and $\theta_2$ are too.
\end{lem}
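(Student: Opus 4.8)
The plan is to reduce the statement to two standard categorical facts: that monomorphisms are stable under pullback, and that a faithful functor reflects monomorphisms. The key observation is that the commuting diagram (\ref{diagram_concreteness}) identifies $\gamma(\theta_1)$ with $\pi_1\circ u$ and $\gamma(\theta_2)$ with $\pi_2\circ u$, so it suffices to control the two projections out of the pullback and then transport the conclusion back across $\gamma$.

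First I would analyze the pullback square of $f:\gamma(A)\hookrightarrow X$ and $f':\gamma(B)\hookrightarrow X$. In this square the projection $\pi_1:\operatorname{pb}(f,f')\to\gamma(A)$ is precisely the pullback of the arrow $f'$ along $f$, and symmetrically $\pi_2$ is the pullback of $f$ along $f'$. Since $f$ and $f'$ are monomorphisms by hypothesis, and since monomorphisms are stable under pullback in any category, it follows that both $\pi_1$ and $\pi_2$ are monomorphisms in $\mathbf{X}$. Because $u$ is an isomorphism it is in particular monic, so the composites $\gamma(\theta_1)=\pi_1\circ u$ and $\gamma(\theta_2)=\pi_2\circ u$ are compositions of monomorphisms, hence themselves monomorphisms in $\mathbf{X}$.

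Finally I would invoke the fact that a faithful functor reflects monomorphisms. Concretely, suppose $\theta_i\circ g=\theta_i\circ h$ for some parallel pair $g,h$ in $\mathbf{C}$; applying $\gamma$ gives $\gamma(\theta_i)\circ\gamma(g)=\gamma(\theta_i)\circ\gamma(h)$, and since $\gamma(\theta_i)$ is monic this forces $\gamma(g)=\gamma(h)$, whence $g=h$ by faithfulness of $\gamma$. Thus each $\theta_i$ is a monomorphism in $\mathbf{C}$, as claimed.

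This argument presents no genuine obstacle; it is a short chain of standard facts. The one point that requires a little care is the first step: one must recognize each projection as the pullback of the \emph{opposite} leg (that is, $\pi_1$ as a pullback of $f'$, not of $f$), so that pullback-stability of monomorphisms applies correctly. Once that identification is set up, faithfulness of $\gamma$ does the rest.
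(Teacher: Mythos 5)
Your proof is correct and follows essentially the same route as the paper's: both use the commutativity of diagram (\ref{diagram_concreteness}) to write $\gamma(\theta_i)=\pi_i\circ u$, deduce that the $\pi_i$ are monic by pullback-stability of monomorphisms, and conclude by the fact that the faithful functor $\gamma$ reflects monomorphisms. Your version merely spells out the details (in particular the observation that $\pi_1$ is the pullback of the \emph{opposite} leg $f'$) that the paper leaves implicit.
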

\begin{proof}
By the comutativity of the diagram above, we have $\gamma(\theta_i)=\pi_i\circ u$, with $i=1,2$. Since $f$ and $f'$ are monomorphisms, it follows that $\pi_i$ are monomorphisms too. Thus, $\gamma(\theta_i)$ also are. Since $\gamma$ is faithful, it reflect monomorphisms. 
\end{proof}
We can now put the previous discussion in our context.
\begin{definition}
 Let $\mathcal{X}=(\mathbf{X},\gamma,\gamma_\Sigma)$ be a $\Gamma$-ambient and let $X\in \mathbf{X}$. Define
 \begin{enumerate}
     \item  $W_{\gamma}(B;X)$ as the collection of all monomorphisms $\imath: \gamma(B)\hookrightarrow X$, where $B\in\mathbf{NFre}_{\Gamma}$, and $W_{\gamma}(X)$ as the union of all of them;
     \item $W_{\gamma_{\Sigma}}(B;X)$ as the collection of all morphisms $f:\gamma_{\Sigma}(B)\rightarrow X$, for $B\in \mathbf{NFre}_{\Gamma\times\Gamma}$ which are of the form $f=\imath\circ g$, where $\imath:\gamma_{\Sigma}(B')\hookrightarrow X$ is a monomorphism and $g:\gamma_{\Sigma}(B)\rightarrow \gamma_{\Sigma}(B')$ is an arbitrary morphism. Let $W_{\gamma_{\Sigma}}(X)$ be the union of all of them.
 \end{enumerate}
 We say that $\mathcal{X}$ is \textit{$\gamma$-concrete} (resp. \textit{$\gamma_{\Sigma}$-concrete}) in $X$ if the functor $\gamma$ (resp. $\gamma_\Sigma$) satisfies the concreteness property in $W_{\gamma}(X)$ (resp. $W_{\gamma_{\Sigma}}(X)$). We say that $\mathcal{X}$ is \textit{concrete} if it is both $\gamma$-concrete and $\gamma_{\Sigma}$-concrete.
\end{definition}
\begin{remark}\label{remark_need_concreteness}
The class $W_{\gamma_{\Sigma}}(B;X)$ contains morphisms of the form $f=\imath\circ g$ (instead of only monomorphism) precisely because we will need to concrectify diagrams like (\ref{diagram_abstract_intersection}), where $g=\gamma_\Sigma(*)$. Notice that if $g$ was assumed to be a monomorphism, then by the arguments of Lemma \ref{lemma_monomorphisms}, the multiplication $*:B\otimes B\rightarrow B$ would be a monomorphism too, which is not true in full generality.
\end{remark}
\begin{definition}
 Let $\mathbb{X}=(\mathcal{X},X,\imath,\imath')$ be an intersection structure between two fixed $\Gamma$-spaces $B$ and $B'$. We say that it is \textit{$\gamma$-concrete}\footnote{In \cite{eu_1} this corresponds to the notion of \textit{proper} intersection structure.} (resp. \textit{$\gamma_\Sigma$-concrete} or concrete) if $\Gamma$-ambient $\mathcal{X}$ is $\gamma$-concrete (resp. $\gamma_\Sigma$-concrete or concrete) in $X$.
\end{definition}
\begin{definition}
 Consider, now, an intersection structure $\mathbb{X}=(\mathcal{X},\mathbb{X}_0,\mathbb{X}_*,\mathbb{X}_+)$ between two distributive $\Gamma$-spaces $(B,\epsilon,\delta,*,+)$ and $(B',\epsilon',\delta',*',+')$. We say that $\mathbb{X}$ is \textit{concrete} if $\mathbb{X}_0$ is $\gamma$-concrete and $\mathbb{X}_*$ and $\mathbb{X}_+$ are $\gamma_{\Sigma}$-concrete. The \textit{concrete intersection} between $B$ and $B'$ in $\mathbb{X}$ is given by the concrete cospans of the spans\footnote{Take a look at diagrams (\ref{diagram_abstract_intersection}) and (\ref{diagram_concreteness}), and at Remark \ref{remark_need_concreteness}.} $(\imath,\imath')$, $(\jmath_*,\jmath_*')$, $(\jmath_+,\jmath_+')$, $(\jmath_*\circ \gamma_{\Sigma}(*),\jmath_*'\circ \gamma_{\Sigma}(*'))$ and $(\jmath_+\circ \gamma_{\Sigma}(+),\jmath_+'\circ \gamma_{\Sigma}(+'))$. The vertices of these cospans will be respectively denoted by $B\cap_{X,\gamma}B'$, $B_\epsilon\cap_{X_*,\gamma_{\Sigma}}B_{\epsilon'}$, $B_\delta\cap_{X_+,\gamma_{\Sigma}}B_{\delta'}$, $*\cap_{X_*,\gamma_{\Sigma}}*'$ and $+\cap_{X_+,\gamma_{\Sigma}}+'$.   
\end{definition}
\begin{remark}
Notice that if an abstract intersection between $\Gamma$-spaces $B$ and $B'$ is concrete, then it can itself be regarded as a $\Gamma$-space. However, if an abstract intersection between \textit{distributive} $\Gamma$-space is concrete, then it is not necessarily a \textit{distributive} $\Gamma$-space; it is only a $\Gamma\times \Gamma$-space with further stuff. This is enough for our purposes. 
\end{remark}

 We close with a useful lemma which shows that for concrete vector ambients, concrete intersections of nontrivial distributive $\Gamma$-spaces are nontrivial too.
\begin{lem}\label{vectorial_intersection}
Let $\mathbb{X}$ be a concrete intersection structure between distributive $\Gamma$-spaces $(B,\epsilon,\delta,*,+)$ and $(B',\epsilon',\delta',*',+')$, whose underlying $\Gamma$-ambient $\mathcal{X}$ is vectorial. If there exist $i,j\in \Gamma$ such that $B_i$ or $B'_j$ are nontrivial, then $\mathbb{X}$ is nontrivial too.
\end{lem}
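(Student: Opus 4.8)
We read the conclusion as the assertion that the vertex $B\cap_{X,\gamma}B'$ of the concrete intersection is a nontrivial object of $\mathbf{NFre}_{\Gamma}$. The plan is to push the question into the ambient $\mathbf{X}=\mathbf{Vec}_{\mathbb{R},\Gamma\times\Gamma}$, where pullbacks are transparent, and then to pull nontriviality back through $\gamma$ using the defining feature of a vectorial ambient. The starting point is the concreteness property: by diagram (\ref{diagram_concreteness}) applied to the span $(\imath,\imath')$, the vertex $B\cap_{X,\gamma}B'$ comes equipped with an isomorphism $u:\gamma(B\cap_{X,\gamma}B')\simeq\operatorname{pb}(\imath,\imath')$ in $\mathbf{X}$. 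Hence it suffices to control the single object $\operatorname{pb}(\imath,\imath')$.

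First I would record the transfer step. Since $\mathcal{X}$ is vectorial, $\gamma=\gamma_{\Sigma}\circ F$ with both $F$ and $\gamma_{\Sigma}$ creating null objects, and a functor that creates null objects in particular reflects them; therefore $\gamma$ reflects null objects, i.e. $\gamma(A)\simeq 0$ forces $A\simeq 0$. Combined with the isomorphism $u$, this reduces the lemma to the purely linear-algebraic statement that $\operatorname{pb}(\imath,\imath')\neq 0$: if the pullback is nonzero then so is $\gamma(B\cap_{X,\gamma}B')$, and reflection of null objects then yields $B\cap_{X,\gamma}B'\neq 0$.

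Next I would compute the pullback explicitly. In $\mathbf{Vec}_{\mathbb{R},\Gamma\times\Gamma}$ limits are formed bidegreewise, and the pullback of the two monomorphisms $\imath,\imath'$ is, in each bidegree $(p,q)$, the genuine intersection of subspaces $\imath(\gamma(B))_{(p,q)}\cap\imath'(\gamma(B'))_{(p,q)}$ inside $X_{(p,q)}$. The same reflection used above shows the ambient side of the hypothesis is not vacuous: if, say, $B_{i_0}\neq 0$, then $\gamma(B)\neq 0$, so $\imath(\gamma(B))$ is a nonzero graded subspace of $X$ (the case of some nonzero $B'_{j_0}$ being symmetric). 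It then remains to produce a bidegree in which $\imath(\gamma(B))$ and $\imath'(\gamma(B'))$ share a nonzero vector.

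The main obstacle is exactly this overlap. As the Example preceding the statement shows, for the split ambient $X=\gamma(B)\oplus\gamma(B')$ the pullback collapses to $0$, so the overlap cannot follow from nontriviality of the factors alone and must be extracted from the vectorial structure. I expect to resolve this by exploiting that $\gamma=\gamma_{\Sigma}\circ F$ routes both images through $\gamma_{\Sigma}$ into a common span inside $X$, which pins a nonzero vector of $\imath(\gamma(B))$ to a vector of $\imath'(\gamma(B'))$ in the appropriate bidegree and thereby excludes the degenerate split ambients; once one such nonzero element of $\operatorname{pb}(\imath,\imath')$ is exhibited, the transfer step of the second paragraph closes the argument. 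If needed, the same scheme applies verbatim to the remaining vertices $B_{\epsilon}\cap_{X_*,\gamma_{\Sigma}}B'_{\epsilon'}$ and $B_{\delta}\cap_{X_+,\gamma_{\Sigma}}B'_{\delta'}$, using $\gamma_{\Sigma}$-concreteness in place of $\gamma$-concreteness.
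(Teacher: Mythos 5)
Your proposal has a genuine gap, and it sits exactly where you flag ``the main obstacle'': the nonzero overlap is never produced, and the mechanism you propose for producing it cannot work. The factorization $\gamma=\gamma_{\Sigma}\circ F$ is a statement about functors between categories; it imposes no constraint whatsoever on the particular monomorphisms $\imath:\gamma(B)\hookrightarrow X$ and $\imath':\gamma(B')\hookrightarrow X$, which are free data of the intersection structure, so it cannot ``pin'' a nonzero vector of $\imath(\gamma(B))$ to a vector of $\imath'(\gamma(B'))$ in any bidegree. Worse, the Example immediately preceding the lemma refutes your reduction within the ingredients you allow yourself: the standard ambient of Example \ref{standard_ambient} is vectorial (it is the basic instance of Example \ref{vectorial_ambient}), and there, for $X=\gamma(B)\oplus\gamma(B')$, one has $\operatorname{pb}(\imath,\imath')\simeq 0$ \emph{independently of} $\imath$ and $\imath'$, even when $B$ and $B'$ are nontrivial. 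Hence your reading of the conclusion --- that the single vertex $B\cap_{X,\gamma}B'$ of $\mathbb{X}_0$ is nonzero --- is not provable from the hypotheses; ``$\mathbb{X}$ is nontrivial'' must be read as nontriviality of the concrete intersection structure as a whole, and any correct argument has to draw on precisely the data you defer to a closing ``if needed'' remark: the distributive part $\mathbb{X}_*$, $\mathbb{X}_+$ together with the pullbacks against $\gamma_{\Sigma}(*)\circ\jmath_*$ and $\gamma_{\Sigma}(+)\circ\jmath_+$ of diagram (\ref{diagram_abstract_intersection}). Note that these are pullbacks of morphisms that are \emph{not} monomorphisms (see Remark \ref{remark_need_concreteness}), so in $\mathbf{Vec}_{\mathbb{R},\Gamma\times\Gamma}$ they can be nonzero --- for instance via kernels, since the pullback of linear maps $f,f'$ with common target contains $\ker f\oplus\ker f'$ --- even when the images of the two legs are disjoint, which is exactly the phenomenon your split-ambient obstruction cannot see; the normalization $+_{ii}=+_i$ and the nontriviality of some $B_i$ or $B'_j$ then give something concrete to feed into this.

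For the comparison you were asked to make: the paper contains no in-text proof to match --- it simply cites Proposition 1, page 10, of \cite{eu_1} --- so the intended argument lives in that reference, where the distributive structure is available and is what carries the nontriviality. Your preparatory steps are sound and reusable: transferring along the concreteness isomorphism $u$ of diagram (\ref{diagram_concreteness}), using that the functors of a vectorial ambient create (hence reflect) null objects, and computing pullbacks in $\mathbf{Vec}_{\mathbb{R},\Gamma\times\Gamma}$ bidegreewise are all correct moves. But as written, the proposal reduces the lemma to a statement that is false in the generality you retain, and then substitutes a hope for the decisive step.
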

\begin{proof}
See Proposition 1, page 10, of \cite{eu_1}.
\end{proof}

\subsection{Extension to Sheaves}
In the last sections we discussed that in order to formulate a notion of ``regularity'' on a $C^k$-manifold $M$ we have to consider nontrivial intersections between the existing $C^k$ regularity and the additional one. Notice, on the other hand, that by the very notion of regularity, they are \textit{local} properties. This means that a manifold $M$ is ``regular'' when the regularity is satisfied on the local pieces $U_s\subset M$ in such a way that coherence conditions hold at the intersections $U_{ss'}=U_s\cap U_{s'}$. Thus, we do not need a \textit{global} $\Gamma$-space $B(M)$, but actually one $B(\varphi_s(U_s))$ for each local chart $\varphi_s:U_s\rightarrow \mathbb{R}^n$. Therefore, we have to work with presheaves $B:\operatorname{Open}(\mathbb{R}^n)^{op}\rightarrow \mathbf{NFre}_{\Gamma}$ of $\Gamma$-spaces.

\begin{example}\label{example_Ck_presheaf}
The examples of $\Gamma$-spaces in Example \ref{example_1_gamma_spaces} and Example \ref{example_2_gamma_spaces} extends, naturally, to presheaves of $\Gamma$-spaces. E.g, given $m,k$ and $\beta:\Gamma \rightarrow [0,k]$, we have the presheaf $U\mapsto (C^{k-\beta(i)}(U))_i$. It will be denoted by $C^{k-\beta}_m$. or simply $C^{k-\beta}$ if $m=1$. If, in addition, $\Gamma = [0,k]$ and $\beta=id$ it will be denoted by $C^{k-}$.
\end{example}
\begin{example}
Let $B$ be a presheaf of $\Gamma$-spaces. Then every function $\alpha:\Gamma' \rightarrow \Gamma$ defines a new presheaf of $\Gamma'$-spaces $B_\alpha$ by $(B_\alpha)(U)=B(U)_\alpha$.
\end{example}

Notice that all the discussion above can be internalized in the presheaf category
of presheaves in $\mathbb{R}^{n}$ by means of just taking a parametrization of definitions and results in terms of opens sets of $\mathbb{R}^n$. Thus, we can talk about presehaves of distributive $\Gamma$-spaces, presheaves of intersection structures between presheaves of distributive $\Gamma$-spaces, and so on. We refer the reader to Section 3 of \cite{eu_1} for more details to this extension to the presheaf setting.

\begin{remark}
To maintain compatibility with notations of \cite{eu_1}, throughout this paper a presheaf of intersection structures between presheaves of distributive $\Gamma$-spaces $B$ and $B'$ will be called an \textit{intersection structure presheaf} (ISP). Furthermore, it will be denoted simply by $\mathbb{X}=(\mathcal{X},X)$, where $\mathcal{X}=(\mathbf{X},\gamma,\gamma_{\Sigma})$ is the $\Gamma$-ambient and $X:\operatorname{Op}(\mathbb{R}^n)^{op}\rightarrow \mathbf{NFre}_{\Gamma}$ is the presheaf of ambient $\Gamma$-spaces.
\end{remark}

\subsection{$B^k_{\alpha,\beta}$-Presheaves}

In order to prove the existence of a geometric object on a smooth manifold one typically first proves local existence and then uses a partition of unity to globalize the construction. To extend the use of partition functions to the present regular setting we will work with presheaves of $\Gamma$-spaces which are \textit{$B^k_{\alpha,\beta}$-presheaves}. See Section 3 of \cite{eu_1} for more details.

 \begin{itemize}
     \item In the following: $C^{k}_b(U)\subset C^{k}(M)$ denotes the space of $C^k$-bump functions with the Fr\'echet structure induced by the seminorms (\ref{frechet_ck}).
 \end{itemize}
\begin{definition}
We say that a set of indexes $\Gamma '$ has \textit{degree $r$} for some $0\leq r \leq k$ if it contains the interval $[0,r]$. 
\end{definition}
\begin{itemize}
    \item In the following we will write $\Gamma_r$ to denote a generic set with degree $r$, while $\Gamma$ will remain an arbitrary set.
\end{itemize}
\begin{definition}
Let $B=(B_i)_{i\in \Gamma}$ be a presheaf of $\Gamma$-spaces and let $\alpha:\Gamma' \rightarrow \Gamma$ and $\beta:\Gamma' \rightarrow [0,k]$ be functions. We say that the tuple $(B,\alpha,\beta,k)$ has \textit{degree $r$} if the domain $\Gamma'$ of the functions $\alpha$ and $\beta$ has degree $r$, i.e., if they are defined in some $\Gamma_r$.
\end{definition}

 \begin{definition}\label{definition_C^k-presheaves}
Let $(B,\alpha,\beta,k)$ be a tuple as above. Let $\mathbb{X}=(\mathcal{X},X)$ be a concrete ISP between $B_{\alpha}$ and $C^{k-\beta}$. We say that $(B,k,\alpha,\beta)$ is a $B^{k}_{\alpha,\beta}$\textit{-presheaf} in $\mathbb{X}$ if it has degree $k$ and for every open set $U\subset\mathbb{R}^n$ and every $i\Gamma_k$ there exists the dotted arrow $\star_{U,i}$ making commutative
the diagram below, where $\theta_{2,U}$ is part of the concrete cospan\footnote{which therefore is a monomophisms due to Lemma \ref{lemma_monomorphisms}.}.  
\begin{equation}{\label{diagram_star}
\xymatrix{C^{k}(U)\otimes C^{k-\beta(i)}(U) \ar[r]^-{\cdot_{U,i}} & C^{k-\beta(i)}(U) \\
\ar@{^(->}[u]^{\imath_U \otimes \theta_{2,U}} C^k_b(U) \otimes B_{\alpha (i)}(U) \cap_{X(U)} C^{k-\beta(i)}(U) \ar@{-->}[r]_-{\star_{U,i} } & B_{\alpha(i)}(U) \cap_{X(U)} C^{k-\beta(i)}(U) \ar@{^(->}[u]_{\theta_{2,U}} }}
\end{equation}
\end{definition}

Intuitively, a $B^k_{\alpha,\beta}$-presheaf is a presheaf of $\Gamma$-spaces $B(U)$, which, when regarded as a presheaf $B_{\alpha}$ of $\Gamma_k$-spaces via $\alpha$, has an intersection space with $C^{k-\beta}(U)$ that is closed under multiplication of $C^k$-bump functions.

\begin{example}\label{example_b_k_a_b_1}
For every $k,\beta,\alpha$ the presheaf of $C^{k-\alpha}$ is a
$B_{\beta\beta}^{k}$-presheaf in the ISP which is objectwise
the standard vectorial intersection structure of Example \ref{vectorial_ambient}.

\begin{example}\label{example_b_k_a_b_2}
Similarly, the presheaves $L_\alpha(U)_{i}=L^{\alpha(i)}(U)$ with the distributive structure given by pointwise sum and multiplication,
is a nice $B_{\beta}^{k,\alpha}$-presheaf in
the standard ISP, for $A(U)=C_{b}^{\infty}(U)$. An analogous
conclusion is valid if we replace pointwise multiplication with convolution
product.
\end{example}
\end{example}
We close this section with two remarks concerning Definition \ref{definition_C^k-presheaves}.
 \begin{enumerate}
     \item \label{nice_presheaves} It has a generalization where one requires invariance under multiplication only by a subvector space $A(U)\subset C^k(U)$ with a nuclear Fr\'echet structure. In this case, for instance, in Example \ref{example_b_k_a_b_2} we would get a different $B^k_{\alpha,\beta}$-structure in $L_\alpha$ taking $A(U)=\mathcal{S}(U)$ as the Schwartz space. Following \cite{eu_1}, a \textit{nice} $B^k_{\alpha,\beta}$-presheaf should be one such that $A(U)$ nontrivially intersects $C^k_b(U)$. In the following, however, we will work only with $A(U)=C^k_b(U)$, so that every $B^k_{\alpha,\beta}$-preshef will be nice.
     \item Following the conventions of \cite{eu_1} a $B^k_{\alpha,\beta}$-presheaf in $\mathbb{X}$ whose presheaves of intersection spaces $B_\alpha \cap_X C^{k-\beta}$ are nontrivial is called a \textit{$C^k_{\alpha,\beta}$-presheaf}. In this paper we will not work directly with them. However, from Lemma \ref{vectorial_intersection}, for vectorial ISP $\mathbb{X}$, if  $B_{\alpha}$ is nontrivial (i.e., if for every $U\subset \mathbb{R}^n$ there is $i_U\in \Gamma$ such that $B_{\alpha(i_U)}(U)\neq 0$), then every $B^k_{\alpha,\beta}$-presheaf in $\mathbb{X}$ is a $C^k_{\alpha,\beta}$-presheaf. In particular, the $B^k_{\alpha,\beta}$-presheaves of Examples \ref{example_b_k_a_b_1}-\ref{condition_2} are $C^k_{\alpha,\beta}$-presheaves.
\end{enumerate}

\subsection{$B^k_{\alpha,\beta}$-Functions}

As discussed in the introduction, in order to formalize the notion of regularity on manifolds we have to demand conditions not only on the transition functions $\varphi^{a}_{s's}:\varphi_{s}(U_{ss'})\rightarrow \mathbb{R}$, with $a=1,...,n$, but also on their derivatives $\partial^\mu\varphi^{a}_{s's}$, with $\mu=1,...,k$, where $\varphi_{s's}=\varphi_{s'}\circ \varphi^{-1}_s$.
In our context this is described as follows. See Section 5 of \cite{eu_1} for a complete exposition.

\begin{definition}
Let $(B,k,\alpha,\beta)$ be a $B^k_{\alpha,\beta}$-presheaf in a concrete ISP $\mathbb{X}$. Let $U\subset\mathbb{R}^n$ be an open set and let $f:U\rightarrow \mathbb{R}$ be a real $C^k$-function. Let $S\subset [0,k]\subset \Gamma_k$ be a subset\footnote{Here is where we are using that a $B^k_{\alpha,\beta}$-presheaf has degree $k$.}. We say that $f$ is a $(B^k_{\alpha,\beta}\vert S)$\textit{-function} (or $(B,k,\alpha,\beta|S)$\textit{-function}) in $\mathbb{X}$ if for every $i\in S$ we have $\partial^{i}f \in B_{\alpha(i)}(U)\cap_{X(U)}C^{k-\beta(i)}(U)$. A vectorial function $f:U\rightarrow\mathbb{R}^m$, with $m\geq 1$ is a $(B^k_{\alpha,\beta}|S)$\textit{-function} in $\mathbb{X}$ if its coordinate functions $f^a:U\rightarrow \mathbb{R}$, with $a=1,...,m$, are.
\end{definition}

Thus, for instance, if $\Gamma = [0,k]=\Gamma_k$ and $\alpha(i)=i=\beta(i)$, then $f$ is a $(B,k,\alpha,\beta|S)$-function in $\mathbb{X}$ precisely if $\partial^\mu f$ belongs to $B_i(U) \cap_X(U) C^{k-i}(U)$. This means that the set $S$ and the functions $\alpha$ and $\beta$ determine how the derivatives $\partial^\mu f$ of $f$ lose regularity when $\mu$ increases.

\subsection{$B^k_{\alpha,\beta}$-Manifolds}
We are now ready to define what is a $B^k_{\alpha,\beta}$-manifold. See Section 5 of \cite{eu_1}.

\begin{definition}
A $C^{k}$-\emph{manifold} is a paracompact Hausdorff topological
space $M$ endowed with a maximal atlas $\mathcal{A}$ with charts
$\varphi_{s}:U_{s}\rightarrow\mathbb{R}^{n}$, whose transition functions
$\varphi_{s's}=\varphi_{s'}\circ\varphi_{s}^{-1}:\varphi_{s}(U_{ss'})\rightarrow\mathbb{R}^{n}$
are $C^{k}$, where $U_{ss'}=U_{s}\cap U_{s'}$.
\end{definition}
\begin{definition}
Let $(M,\mathcal{A})$ be a $C^k$-manifold. A subatlas $\mathcal{A}'\subset \mathcal{A}$ is \textit{closed under restrictions} if restrictions of charts in $\mathcal{A'}$ to smaller open sets remains in $\mathcal{A'}$. More precisely, if $(\varphi,U)\in \mathcal{A}'$ and $V\subset U$, then $(\varphi\vert_V,V)\in \mathcal{A}'$. 
\end{definition}
\begin{definition}
Let $(B,k,\alpha,\beta)$ be a $B^k_{\alpha,\beta}$-presheaf in a concrete ISP $\mathbb{X}$. A $(B_{\alpha,\beta}^{k},\mathbb{X})$\emph{-structure
}in a $C^{k}$-manifold $(M,\mathcal{A})$ is a subatlas $\mathcal{B}_{\alpha,\beta}^{k}(\mathbb{X})$ closed under restrictions
whose transition functions $\varphi_{s's}$ are $(B,k,\alpha,\beta)$-functions in $\mathbb{X}$. Explicitly, this means that if $\varphi_s\in \mathcal{B}_{\alpha,\beta}^{k}(\mathbb{X})$, then for each other $\varphi_{s'}\in \mathcal{B}_{\alpha,\beta}^{k}(\mathbb{X})$ such that $U_{ss'}\neq \varnothing$, we have 
$$
\partial^{i}\varphi^a_{ss'}\in B_{\alpha(i)}(\varphi_s(U_{ss'}))\cap_{X(\varphi_s(U_{ss'}))}C^{k-\beta(i)}(\varphi_s(U_{ss'}))
$$
for every $i\in [0,k]\subset \Gamma_k$.
\end{definition}
\begin{definition}
A \emph{$(B_{\alpha,\beta}^{k},\mathbb{X})$-manifold
}is a $C^{k}$ manifold endowed with a $(B_{\alpha,\beta}^{k},\mathbb{X})$-structure.
\end{definition}

In \cite{eu_1} the authors define \textit{morphisms} between two $(B^k_{\alpha,\beta},\mathbb{X})$-manifolds as $C^k$-maps whose local representation by charts in the corresponding $(B^k_{\alpha,\beta},\mathbb{X})$-structures are $(B,k,\alpha,\beta)$-functions in $\mathbb{X}$. Here we will need only the particular case between $C^k$-manifolds.
\begin{definition}
Let $(M,\mathcal{A})$ and $(M',\mathcal{A}')$ be $C^k$-manifolds, let $\mathbb{X}$ a concrete ISP, $(B,k,\alpha,\beta)$ a $B^k_{\alpha,\beta}$-presheaf in $\mathbb{X}$ and $S\subset [0,k]\subset \Gamma_k$ a subset. A \textit{$(B,k,\alpha,\beta|S)$-morphism}, \textit{$(B,k,\alpha,\beta|S)$-function} or \textit{$(B^k_{\alpha,\beta}|S)$-morphism}  between $M$ and $M'$ is a $C^k$-function $f:M\rightarrow M'$ such that for every $\varphi\in\mathcal{A}$ and $\varphi ' \in \mathcal{A'}$ the corresponding $C^k$-function $\varphi' \circ f \circ \varphi^{-1}$ is actually a $(B^k_{\alpha,\beta}|S)$-function in $\mathbb{X}$. If $S=[0,k]$ we say simply that $f$ is a \textit{$B^k_{\alpha,\beta}$-morphism}.
\end{definition}

\begin{remark}
From the examples above it is clear that a $C^k$-manifold does not necessarily admits any $(B^k_{\alpha,\beta},\mathbb{X})$-structure. Thus, there are obstructions which typically can be characterized as topological or geometric obstructions on the underlying $C^k$-structure. A complete characterization of these obstructions was not found yet. On the other hand, in \cite{eu_1} the authors give sufficient conditions on $B$, $\alpha$, $\beta$, $k$ and on the underlying $C^k$-structure ensuring the existence of $B^k_{\alpha,\beta}$-structures, specially in the case where $(B,k,\alpha,\beta)$ is a $C^k_{\alpha,\beta}$-presheaf.
\end{remark}

\subsection{Affine $B^k_{\alpha,\beta}$-Connections}\label{sec_affine_connections}

We now introduce the objects of study in this paper: affine $\nabla$ connections on $C^k$-manifolds $M$ whose local coefficients $\Gamma^c_{ab}$ satisfies additional regularity conditions, possibly depending on some regularity on the underlying manifold $M$. In other words, we will consider ``regular'' affine connections on $(B^k_{\alpha,\beta},\mathbb{X})$-manifolds. The obvious idea would then to be take affine connections whose coefficients $(\Gamma_\varphi)^c_{ab}:\varphi(U)\rightarrow \mathbb{R}$ in each $(\varphi,U)\in \mathcal{B}^k_{\alpha,\beta}(\mathbb{X})$ are $B^k_{\alpha,\beta}$-functions. This, however, is too restrictive to the global existence of such objects due to two reasons.
\begin{enumerate}
    \item  \label{condition_1} \textit{The regularity rate decay of the derivatives $\partial^\mu \Gamma^c_{ab}$, when $\mu$ grows, is typically different of that $\partial^{\mu}\varphi^a_{ji}$ of the transition functions $\varphi^a_{ji}$}. This leads us instead to consider affine connections whose coefficients are $(B^k_{\alpha',\beta'}|S)$-functions, where $\alpha',\beta'$ are functions typically \textit{different} from $\alpha,\beta$ in shape and/or in domain/codomain. For instance, $\varphi^a_{ji}$ are $C^k$, while $\Gamma^c_{ab}$ is $C^{k-2}$. Thus, if $\beta(i)=i$, then $\beta'$ must be $\beta'(i)=i+2$. Equivalently, we can regard $\beta'$ as defined in $[2,k]$ instead of in $[0,k]$ and take $\beta'(i)=i$. 
    \item \label{condition_2} \textit{The ISP appearing in the regularity of $\Gamma^c_{ab}$ is usually different from that describing the regularity of $\varphi_{ij}$}. For instance, we could consider intersections in the same $\Gamma$-ambient $(\mathbf{X},\gamma,\gamma_{\Sigma})$, but in different presheaves of ambient $\Gamma$-spaces $X$ and $Y$. 
\end{enumerate}
\begin{definition}
Let $(B,k,\alpha,\beta)$ be a $B^k_{\alpha,\beta}$-presheaf in a concrete ISP $\mathbb{X}$. Let $(M,\mathcal{B}^k_{\alpha,\beta}(\mathbb{X}))$ be a $B^k_{\alpha,\beta}$-manifold in $\mathbb{X}$. Let $S \subset [0,k]\subset \Gamma_k$ be a subset and consider functions $\alpha':\Gamma_k \rightarrow \Gamma$ and $\beta':\Gamma_k \rightarrow [2,k]$. Let $\mathbb{Y}$ be a concrete ISP between $B_{\alpha'}$ and $C^{k-\beta'}$. An \textit{affine $(B^k_{\alpha',\beta'}|S)$-connection} in $M$, relative to $\mathbb{Y}$, is an affine connections $\nabla$ in the underlying $C^k$-manifold such that for every $(\varphi,U) \in \mathcal{B}^{k}_{\alpha,\beta}(\mathbb{X})$ the corresponding local coefficients $(\Gamma_\varphi)^c_{ab}:\varphi(U)\rightarrow \mathbb{R}$ are $(B,k,\alpha',\beta'|S)$-functions in $\mathbb{Y}$.
\end{definition}
\subsection{Regularity Globalization Lemma}\label{sec_globalization_lemma}

 Condition \ref{condition_1} and the nice hypotheses on $B^k_{\alpha,\beta}$ suffice to ensure existence of connections $\nabla$ which are \textit{locally regular} in $\mathbb{X}$, following the classical construction of affine connections via partitions of unity. Condition \ref{condition_2} will be used to prove global regularity. More precisely, we show that if $B_{\alpha}$ admits some universal way to connect two ISP from it to $C^{k-\beta}$, then every affine connection $\nabla$ \textit{locally regular} in a given $\mathbb{X}$ induces another affine connection $\overline{\nabla}$ which is \textit{globally regular} in an ISP $\mathbb{Y}$ ``compatible'' with $\mathbb{X}$. Here we introduce these notions of universality and compatibility between two ISP.
\begin{definition}$\;$

\begin{enumerate}
    \item Two ISP $\mathbb{X}$ and $\mathbb{Y}$ between presheaves of $\Gamma$-spaces $B$ and $B'$ are \textit{compatible} if they have the same underlying $\Gamma$-ambient and \textit{strongly compatible} if they also have the same presheaf of ambient spaces $X$;
    \item A \textit{strongly compatible sequence of} ISP (or scISP, for short) between sequences $B=(B_i)$ and $B=(B'_i)$ of presheaves of $\Gamma$-spaces is a sequence $\underline{\mathbb{X}}=(\mathbb{X}_i)$ such that $\mathbb{X}_i$ and $\mathbb{X}_j$ are strongly compatible for every $i,j$; 
    \item Two sequences $\underline{\mathbb{X}}=(\mathbb{X}_i)$ and $\underline{\mathbb{Y}}=(\mathbb{Y}_i)$ of ISP between $B=(B_i)$ and $B'=(B'_{i})$ are \textit{compatible} if $\mathbb{X}_i$ and $\mathbb{Y}_i$ are compatible for every $i$.
\end{enumerate}   
\end{definition}
\begin{definition}
Let $B$ and $B'$ be two presheaves of $\Gamma$-spaces and of $\Gamma'$-spaces, respectively. Let $\Gamma_k$ be a set of degree $k$. A \textit{$\Gamma_k$-connective structure} between $B$ and $B'$ is given by:
\begin{enumerate}
    \item sets of functions $\mathcal{O}\subset \operatorname{Mor}(\Gamma_k,\Gamma)$ and $\mathcal{O}\subset \operatorname{Mor}(\Gamma_k,\Gamma')$;
    \item functions $D_{\mathcal{O}}$ and $D_{\mathcal{Q}}$ assigning to each pairs  $\theta,\theta'\in\mathcal{O}$ and $\vartheta,\vartheta'\in\mathcal{Q}$ corresponding morphisms $D_\mathcal{O}(\theta,\theta'):B_{\theta}\Rightarrow B_{\theta'}$ and $D_\mathcal{Q}(\vartheta,\vartheta'): B'_{\vartheta}\Rightarrow B'_{\vartheta'}$ which satisfy the composite laws $$D_\mathcal{O}(\theta,\theta'')=D_\mathcal{O}(\theta',\theta'')\circ D_\mathcal{O}(\theta,\theta'),$$ meaning that the following triangles are commutative:
    \begin{equation}\label{composition_law_connections}
    \xymatrix{
        B_\theta \ar@{=>}[rrd]_{D_{\mathcal{O}}(\theta,\theta'')} \ar@{=>}[rr]^{D_{\mathcal{O}}(\theta,\theta')} && B_{\theta'} \ar@{=>}[d]^{D_{\mathcal{O}}(\theta',\theta'')} &&& \ar@{=>}[d]_{D_{\mathcal{Q}}(\vartheta',\vartheta'')} B_{\vartheta'}&& B_{\vartheta} \ar@{=>}[ll]_{D_{\mathcal{Q}}(\vartheta,\vartheta') } \ar@{=>}[lld]^{D_{\mathcal{Q}}(\vartheta,\vartheta'') } \\
        && B_{\theta''} &&& B_{\vartheta ''} }
    \end{equation}
    \item  for every $(\theta,\vartheta)\in\mathcal{O}\times \mathcal{Q}$ a concrete ISP $\mathbb{X}_{\theta,\vartheta}$ between $B_\theta$ and $B'_{\vartheta}$ whose corresponding sequence $\underline{\mathbb{X}}=(\mathbb{X}_{\theta,\vartheta})$ is strongly compatible. Let $X$ denote the underlying presheaf of ambient spaces;
    \item  for every other scISP $\underline{\mathbb{Y}}=(\mathbb{Y}_{\theta,\vartheta})$ between $B$ and $B'$, indexed in $\mathcal{O}\times \mathcal{Q}$ and compatible with $\underline{\mathbb{X}}=(\mathbb{X}_{\theta,\vartheta})$, a morphism $\mathcal{D}(X;Y):X\Rightarrow Y$ of presheaves, where $Y$ is the presheaf of ambient spaces of each $\mathbb{Y}_i$, such that the diagram below commutes for every $\theta,\theta',\vartheta,\vartheta'$. 
    \begin{equation}{\label{connection_on_B}
\xymatrix{ \ar@{==>}[rd]^{\xi^{\theta,\vartheta}_{\theta'\vartheta'}} \ar@{=>}[dd] (B_{\theta} \cap _{X,\gamma} B'_{\vartheta}) \ar@{=>}[rr] &&  B'_{\vartheta} \ar@{=>}@/^{0.75cm}/[ddd]^{\jmath'_{\vartheta}} \ar@{=>}[d]_{D_{\mathcal{Q
}}(\vartheta ,\vartheta ')} \\
& \ar@{=>}[r] (B_{\theta '} \cap_{Y} B'_{\vartheta '}) \ar@{=>}[d] & B'_{\vartheta '} \ar@{=>}[d]^{\imath_{\vartheta'} '} \\
 B_{\theta} \ar@{=>}[r]^{D_{\mathcal{O}}(\theta ,\theta ')} \ar@{=>}@/_{0.5cm}/[rrd]_{\jmath_{\theta}} & \ar@{=>}[r]_{\imath_{\theta'}} B_{\theta '} & Y 
\\
&& X\ar@{=>}[u]^{\mathcal{D}(X;Y)}}}
\end{equation}
\end{enumerate}
\end{definition}
\begin{remark}\label{remark_composition_law_connections}
By the universality of pullbacks, there exists the dotted arrow $\xi^{\theta,\vartheta}_{\theta',\vartheta'}$ in diagram (\ref{connection_on_B}). Furthermore, the composition laws (\ref{composition_law_connections}) and the uniqueness of pullbacks imply a composition law for those dotted arrows: 
$$
\xi^{\theta,\vartheta}_{\theta '',\vartheta ''}=\xi^{\theta',\vartheta'}_{\theta '',\vartheta ''}\circ \xi^{\theta,\vartheta}_{\theta ',\vartheta '}.
$$
\end{remark}
\begin{definition}
We say that the sequence $\underline{X}=(\mathbb{X}_{\theta,\vartheta})$ is the \emph{base} scISP of the $\Gamma_k$-connective structure. 
\end{definition}
\begin{itemize}
    \item In the following we will say ``consider a $\Gamma_k$-connective structure between $B$ and $B'$ in the scISP $\underline{\mathbb{X}}$'', meaning that  $\underline{\mathbb{X}}$ is the base scISP of the refereed connective structure.
\end{itemize}

In the case where $B$ and $B'$ are presheaves of distributive $\Gamma$-spaces and $\Gamma'$-spaces, we need to require that the connection between them preserves sum and multiplication. 
 \begin{definition}\label{distributive_connection}
 Let $B$ and $B'$ be presheaves of distributive $\Gamma$-spaces and $\Gamma'$-spaces, respectively. A  $\Gamma_k$-connective structure between $B$ and $B'$ in $\underline{\mathbb{X}}=(\mathbb{X}_{\theta,\vartheta})$ is \textit{distributive} if for every $\theta,\theta'\in\mathcal{O}$ and every $\vartheta,\vartheta'\in\mathcal{Q}$ there are objective monomorphisms $(\gamma_*)_{\theta,\theta'}:B_{\epsilon(\theta,\theta)}\Rightarrow B_{\epsilon(\theta',\theta')}$
and $(\gamma'_{*'})_{\vartheta,\vartheta'}:B'_{\epsilon'(\vartheta,\vartheta)}\Rightarrow B'_{\epsilon'(\vartheta',\vartheta')}$
making commutative the diagram below, and also objectwise monomorphisms
$(\gamma_{+})_{\theta,\theta'}:B_{\delta(\theta,\theta)}\Rightarrow B_{\delta(\theta',\theta')}$
and $(\gamma'_{+'})_{\vartheta,\vartheta'}:B'_{\delta'(\vartheta,\vartheta)}\Rightarrow B'_{\delta'(\vartheta',\vartheta')}$
making commutative the analogous diagram for the additive structures.
$$
\resizebox{\displaywidth}{!}{
\xymatrix{\ar@{=>}[d]_{(\gamma' _{*'})_{\vartheta,\vartheta'}} B'_{\epsilon '(\vartheta,\vartheta)} & \ar@{=>}[l]_-{*'}   B'_{\vartheta} \otimes B'_{\vartheta} & \ar@{=>}[d]_{\xi_{\theta ',\vartheta '}^{\theta, \vartheta}\otimes \xi_{\theta ',\vartheta '}^{\theta ,\vartheta}} (B_{\theta} \cap_X B'_{\vartheta})\otimes (B_{\theta} \cap_X B'_{\vartheta}) \ar@{=>}[r] \ar@{=>}[l] & B_{\theta} \otimes B_{\theta} \ar@{=>}[r]^-{*} & B_{\epsilon (\theta,\theta)} \ar@{=>}[d]^{(\gamma _{*})_{\theta,\theta'}}   \\
B'_{\epsilon '(\vartheta ',\vartheta ')} & \ar@{=>}[l]^-{*'} B'_{\vartheta '} \otimes B'_{\vartheta '} & (B_{\theta'} \cap_{Y} B'_{\vartheta '})\otimes (B_{\theta '} \cap_Y B'_{\vartheta '}) \ar@{=>}[r] \ar@{=>}[l] & B_{\theta '} \otimes B_{\theta '} \ar@{=>}[r]_-{*} & B_{\epsilon (\theta ',\theta ')}}}
$$

\end{definition}
Let us now return to our context of $B^k_{\alpha,\beta}$-presheaves. First, a notation:
\begin{itemize}
    \item given a set $\Gamma_k$ of degree $k$, a function $\beta_0:\Gamma_k \rightarrow [2,k]$ and $j\in \Gamma_k$, let $[\beta_0;j]_k$ denote the subset of all $i\in [0,k]\subset \Gamma_k$ such that $0\leq \beta_0(j)-i \leq k$, i.e., the interval $[0,k-\beta_0(j)]$.
    \item Thus, by restriction each presheaf $B$ of $\Gamma'$-spaces and the presheaf\footnote{Recall the notation in Example \ref{example_Ck_presheaf}.} $C^{k-}$ of $[0,k]$ can both be regarded as presheaves of   $[\beta_0;j]_k$-spaces.
 \end{itemize}

\begin{definition}
Let $\mathbb{X}$ be a concrete ISP and let $(B,k,\alpha,\beta)$ be a $B^k_{\alpha,\beta}$-presheaf in $\mathbb{X}$. Given functions $\alpha_0:\Gamma_k\rightarrow \Gamma$ and $\beta_0:\Gamma_k \rightarrow [2,k]$, for each $j\in\Gamma_k$ regard $B$ and $(C^{k-i})_i$ as presheaves of $[\beta_0;j]_k$-spaces. A $(\alpha_0,\beta_0;j)$\textit{-connection} in $B$ is a connection between $B$ and $C{^k-}$ in some scISP $\underline{\mathbb{X}}$, such that:
\begin{enumerate}
    \item $\alpha,\alpha_0\in \mathcal{O}$, $\beta,\beta_0\in \mathcal{Q}$ and $\mathbb{X}_{\alpha,\beta}=\mathbb{X}$;
    \item  $\alpha_{0,j}\in\mathcal{O}$ and $\beta_{\#,j}\in\mathcal{Q}$, where $\alpha_{0,j}(i)=\alpha_0(j)$ is the constant function and $\beta_{,j}(i)=\beta_0(j)-i$ is the shifting function;
    \item if $\vartheta \mathcal{Q}$ is any function bounded from above by $\beta_0$, i.e., if $\vartheta(i)\leq \beta_0(i)$, then the morphism $D_{\mathcal{Q}}(\beta_0,\vartheta):C^{h-\beta_0}\Rightarrow C^{k-\vartheta}$ is the canonical inclusion.
\end{enumerate}
\end{definition}
\begin{definition}
A $(\alpha_0,\beta_0,j)$-connection in $B$ is \textit{distributive} if it is a distributive connection between $B$ and $C^{k-}$ in the sense of Definition \ref{distributive_connection}.
\end{definition}
The globalization of the local regularity of an affine connection in a $(B^k_{\alpha,\beta},\mathbb{X})$-manifold will depends on the existence of a distributive $(\alpha_0,\beta_0;j)$-connection in $B$. Furthermore, the extended regular will have local coefficients which are $(B,\theta,\vartheta,\mathbb{Y}|S)$-functions for functions $(\theta,\vartheta)\in \mathcal{O}\times \mathcal{Q}$ which are ``ordinary'' and for a set $S$ which is of a very special shape, in the the following sense.

\begin{definition}
An \textit{additive structure} in a set $\Gamma_k$ of degree $k$ consists of 
\begin{enumerate}
    \item a set $\Gamma_{2k}$ of degree $2k$ containing $\Gamma_k$;
    \item a map $\overline{+}:\Gamma_k\times \Gamma_k \rightarrow \Gamma_{2k}$ extending sum of nonnegative integers, i.e., such that if $z,l\in [0,k]$, then $z\overline{+}l=z+l$. 
\end{enumerate}
An \textit{additive set of degree $k$} is a set of degree $k$ where an additive structure where fixed.  
\end{definition}
\begin{itemize}
    \item To make notations simpler, in the following we will write $z+l$ instead  of $z\overline{+}l$ even if $z,l$ are arbitrary elements of $\Gamma_k$.
    \item Let $(\Gamma_k,+)$ be an additive set of degree $k$ and $\beta_0:\Gamma_k \rightarrow [2,k]$ be a map. Given $z\in [\beta_0;j]_k$, let $\Gamma_k[z] \subset \Gamma_k$ be the set of every $l\in \Gamma_k$ such that $z+l\in [\beta_0;j]_k$.
\end{itemize}

\begin{lem}\label{lema_S_set}
In the same notations above, for every $z\in [\beta_0;j]_k$ we have  
\begin{equation}\label{dependence_S_z}
    \Gamma_k[z]\subset (\Gamma_k - \Gamma_k\cap[\beta_0(j)-z+1,\infty)).
\end{equation}

\end{lem}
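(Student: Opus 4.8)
The plan is to reduce the assertion to one elementary inequality between nonnegative integers, after unwinding the two set-theoretic definitions entering (\ref{dependence_S_z}). First I would make the index set $[\beta_0;j]_k$ explicit. By definition it consists of those $i\in[0,k]$ with $0\le\beta_0(j)-i\le k$; the left inequality says $i\le\beta_0(j)$, while the right inequality $\beta_0(j)-i\le k$ holds automatically for every $i\ge 0$ because $\beta_0$ takes values in $[2,k]$, so $\beta_0(j)\le k$. Hence $[\beta_0;j]_k=[0,\beta_0(j)]$ is an interval of nonnegative integers, and in particular $z$ and $\beta_0(j)$ are nonnegative integers, so the bound $\beta_0(j)-z+1$ appearing in (\ref{dependence_S_z}) is a well-defined integer.

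Next I would unwind the right-hand side of (\ref{dependence_S_z}). The set $\Gamma_k-(\Gamma_k\cap[\beta_0(j)-z+1,\infty))$ is exactly the collection of $l\in\Gamma_k$ that do \emph{not} lie in the real half-line $[\beta_0(j)-z+1,\infty)$. Since the elements of $\Gamma_k$ that are order-comparable with this half-line are precisely the integers of $[0,k]$, the intersection $\Gamma_k\cap[\beta_0(j)-z+1,\infty)$ reduces to the integer interval $[\beta_0(j)-z+1,k]$, and every abstract (non-integer) element of $\Gamma_k$ belongs to the complement for free. Thus the inclusion (\ref{dependence_S_z}) is equivalent to the single statement that $\Gamma_k[z]$ contains no integer $l$ with $l\ge\beta_0(j)-z+1$.

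I would then prove this by contraposition, invoking the additive structure. Suppose $l\in\Gamma_k$ is an integer with $l\ge\beta_0(j)-z+1$. Because $\overline{+}$ extends ordinary addition on $[0,k]$ and both $z$ and $l$ are nonnegative integers, we have $z\,\overline{+}\,l=z+l\ge\beta_0(j)+1>\beta_0(j)$, so $z\,\overline{+}\,l\notin[0,\beta_0(j)]=[\beta_0;j]_k$. By the definition of $\Gamma_k[z]$ this means $l\notin\Gamma_k[z]$. Combined with the previous paragraph, this shows $\Gamma_k[z]\cap(\Gamma_k\cap[\beta_0(j)-z+1,\infty))=\varnothing$, which is precisely (\ref{dependence_S_z}).

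The calculation itself is immediate, so the only point demanding care — and the step I expect to be the main obstacle — is the bookkeeping around the abstract elements of $\Gamma_k$ together with the fact that $\overline{+}$ is assumed only to extend integer addition, with no a priori monotonicity. The clean way past this is the observation used above: membership $z\,\overline{+}\,l\in[\beta_0;j]_k$ constrains only the integer values of $l$, while every genuinely abstract element of $\Gamma_k$ automatically lands in the right-hand side and needs no estimate. This also explains why (\ref{dependence_S_z}) is a one-sided inclusion rather than an equality: the right-hand side may contain abstract elements of $\Gamma_k$ that fail to lie in $\Gamma_k[z]$.
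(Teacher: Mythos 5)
Your proof is correct and is precisely the ``simple inequalities study'' that the paper's one-line proof leaves implicit: the whole content is the reduction to the single integer inequality $z+l\ge \beta_0(j)+1$ for $l\ge \beta_0(j)-z+1$, together with the observation that non-integer elements of $\Gamma_k$ lie in the right-hand side for free. The only point worth flagging is that your argument (correctly, in my view) takes $[\beta_0;j]_k=[0,\beta_0(j)]$ as dictated by the displayed inequalities $0\le\beta_0(j)-i\le k$ --- the reading under which the stated threshold $\beta_0(j)-z+1$ is the right one --- whereas the paper's parenthetical gloss identifies $[\beta_0;j]_k$ with $[0,k-\beta_0(j)]$, under which the bound in (\ref{dependence_S_z}) would have to read $k-\beta_0(j)-z+1$ instead; this discrepancy is in the source, not in your proof.
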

\begin{proof}
Straightforward: just a simple inequalities study.
\end{proof}
\begin{example}\label{example_S_set}
In particular, if $\Gamma_k=[0,k]$, then $\Gamma_k[z]\subset [0,\beta_0(j)-z]$. More specifically, if $z=0$, then $\Gamma_k[z]=[\beta_0;j]_k$, while if $z=k-\beta_0(j)$, then $\Gamma_k[z]=0$.
\end{example}
\begin{remark}[Important Remark]\label{important_remark}
Sets like $\Gamma_k[z]$ will be the sets $S$ in which the regular affine connections that we will build will have coefficients as $(B,k,\theta,\vartheta|S)$-functions. Thus, for larger $\Gamma_k[z]$ we will have more control on the regularities of higher order derivatives of the coefficient functions. On the other hand, from Lemma \ref{lema_S_set} we see that when $z$ increases, the set $\Gamma_k[z]$ becomes smaller. Furthermore, in Example \ref{example_S_set} we saw that if $\Gamma_k=[0,k]$ is the obvious set of degree $k$, then $\Gamma_k[z]$ strongly depends on $z$. In particular, for $z=k-\beta_0(j)$ we have $\Gamma_k[z]=0$ which would produce a regular affine connection with no control on the regularity of derivatives! This is one of the main reasons for working in the setup of general $\Gamma_k$-spaces: \textit{to have more control on the regularity of derivatives of the new affine connections, specially independing on $z$}. Indeed, notice that if $\Gamma_k$ increases, then  the difference (\ref{example_S_set}) becomes closer to $\Gamma_k$ and, therefore, is independent of $z$.
\end{remark}
\begin{example}\label{example_smooth_case}
If $k=\infty$, then $[\beta_0;j]_\infty=[0,\infty)$ for every $\beta_0$. Thus $[0,\infty)\subset \Gamma_{\infty}[z]$ for every $z\in [\beta_0;j]_\infty=[0,\infty)$, which follows from the fact that the additive structure of $\Gamma_\infty$ is compatible with sum of integers.
\end{example}

\begin{definition}
Let $\Gamma_k$ be a set of degree $k$, $\Gamma$ a set and let   $\mathcal{O}\subset\operatorname{Mor}(\Gamma_k;\Gamma)$ and $\mathcal{Q}\subset\operatorname{Mor}(\Gamma_k;[0,k])$. Let $X\subset \Gamma_r$ a subset. We say that a sequence of pairs  $(\theta_{l},\vartheta_{l})_l$, with $\theta_l\in\mathcal{O}$ and $\vartheta_l\in \mathcal{Q}$, is \emph{ordinary in $X$} if each $l\in X$ and the corresponding pair $(\theta_{*},\vartheta_{*})$,
given by $\theta_{*}(l)=\theta_{l}(z)$ and $\vartheta_{*}(l)=\vartheta_{l}(z)$ also
belongs to $\mathcal{O}\times\mathcal{Q}$. Finally, we say that $(\theta,\vartheta)\in\mathcal{O}\times\mathcal{Q}$
is \emph{ordinary in $X$} if $\theta=\theta'_{*}$ and $\vartheta=\vartheta'_{*}$
for some $\theta'_{*}$ and $\vartheta'_{*}$, i.e, if it is the induced
pair of a sequence of ordinary pairs in $X$.
\end{definition}

We can now state and prove the fundamental step in the globalization of the regularity.

\begin{lem}[Regularity Globalization Lemma]
\label{key_lemma} Let $M$ be a $(B_{\alpha,\beta}^{k},\mathbb{X})$-manifold and suppose that $B$ admits a $(\alpha_{0},\beta_{0};j)$-connection in some \emph{scISP} $\underline{\mathbb{X}}$. Then, for every $U\subset\mathbb{R}^{n}$, every
$z\in [\beta_0;j]_k$ and every \emph{scISP}  $\underline{\mathbb{Y}}=(\mathbb{Y}_{\theta,\vartheta})$ between $B$ and $C^{k-}$, compatible with $\underline{\mathbb{X}}$, the elements of $$B_{\alpha_{0}(j)}(U)\cap_{X(U)}C^{k-\beta_{0}(j)-z}(U)$$
can be regarded as $(B,k,\theta,\vartheta|\Gamma_k[z])$-functions in $\mathbb{Y}_{\theta,\vartheta}$, for every $(\theta,\vartheta)\in\mathcal{O}\times\mathcal{Q}$ ordinary in $\Gamma_k[z]$.
\end{lem}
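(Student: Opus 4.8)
The plan is to unwind the definition of a $(B,k,\theta,\vartheta|\Gamma_k[z])$-function and verify the required membership one derivative order at a time, transporting the regularity of the given $f$ from the base \emph{scISP} $\underline{\mathbb{X}}$ to the compatible one $\underline{\mathbb{Y}}$ through the morphisms supplied by the $(\alpha_0,\beta_0;j)$-connection. Fix $f\in B_{\alpha_0(j)}(U)\cap_{X(U)}C^{k-\beta_0(j)-z}(U)$ and a pair $(\theta,\vartheta)\in\mathcal{O}\times\mathcal{Q}$ ordinary in $\Gamma_k[z]$. By definition it suffices to show that for each $i\in\Gamma_k[z]$ one has $\partial^i f\in B_{\theta(i)}(U)\cap_{Y(U)}C^{k-\vartheta(i)}(U)$, where $Y$ is the presheaf of ambient spaces of $\mathbb{Y}_{\theta,\vartheta}$. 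This is exactly where ordinariness is used: it lets me write $\theta(i)=\theta_i(z)$ and $\vartheta(i)=\vartheta_i(z)$ for an honest sequence of pairs $(\theta_i,\vartheta_i)\in\mathcal{O}\times\mathcal{Q}$, so that each target index is one at which the connective structure (its transport arrows $\xi$, the comparison morphisms $D_{\mathcal{O}}$ and $D_{\mathcal{Q}}$, and the ambient comparison $\mathcal{D}(X;Y)$) is genuinely defined.

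The $C$-side is classical. Ordinary differentiation gives $\partial^i f\in C^{k-\beta_0(j)-z-i}(U)$, and the role of the constraint $i\in\Gamma_k[z]$, equivalently $z+i\in[\beta_0;j]_k$, is precisely to keep this order in the admissible range: by Lemma \ref{lema_S_set} and the computation of Example \ref{example_S_set} the index $i$ is small enough that $C^{k-\beta_0(j)-z-i}(U)$ sits inside $C^{k-\vartheta(i)}(U)$ through the canonical inclusion, which, because $\vartheta$ is bounded above by $\beta_0$, is exactly the morphism $D_{\mathcal{Q}}(\beta_0,\vartheta)$ prescribed by condition (3) of a $(\alpha_0,\beta_0;j)$-connection. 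Thus $\partial^i f$ is an honest element of $C^{k-\vartheta(i)}(U)$; the inequalities involved are the elementary bookkeeping already isolated in Lemma \ref{lema_S_set}.

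The $B$-side and the gluing of the two sides are where the connective structure does the real work, and this is the step I expect to be the main obstacle. Differentiation lives a priori only on the concrete factor $C^{k-}$, so $\partial^i f$ has no literal meaning in $B$; instead its $B$-regularity must be manufactured from the $B$-component of $f$ under the transport arrows of the connection. Writing $\theta_0$ for the constant function $\alpha_0(j)$ and $\vartheta_0$ for the shifting function $i\mapsto\beta_0(j)-i$ of condition (2), I would regard $f$ as the value at the index $z$ of the intersection presheaf $B_{\theta_0}\cap_X C^{k-\vartheta_0}$ and feed it into the dotted arrow $\xi^{\theta_0,\vartheta_0}_{\theta_i,\vartheta_i}$ of diagram (\ref{connection_on_B}), evaluated at that index. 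Commutativity of that diagram guarantees that the image lands in the $\mathbb{Y}$-intersection $B_{\theta(i)}\cap_Y C^{k-\vartheta(i)}$, with $C$-component the inclusion of the honest derivative $\partial^i f$ produced above and $B$-component the $D_{\mathcal{O}}$-image of the $B$-component of $f$, the two being glued coherently inside $Y(U)$ by $\mathcal{D}(X;Y)$. Since $\mathbb{Y}_{\theta,\vartheta}$ is concrete, this element of the abstract pullback is realized by a genuine element of the concrete intersection $B_{\theta(i)}(U)\cap_{Y(U)}C^{k-\vartheta(i)}(U)$, the legs of whose concrete cospan are monomorphisms by Lemma \ref{lemma_monomorphisms}; this is the membership we wanted.

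Finally, to see that these order-by-order verifications assemble into a single coherent function rather than an unrelated family, I would invoke the composition law for the transport arrows recorded in Remark \ref{remark_composition_law_connections}: the various $\xi^{\theta_0,\vartheta_0}_{\theta_i,\vartheta_i}$ factor compatibly through one another, so all the $\partial^i f$ are derivatives of one and the same $f$ and their certificates are mutually consistent. The two genuinely delicate points, and the places where I would spend most care, are (i) checking that the honest derivative $\partial^i f$ and the abstractly transported $B$-certificate really agree inside $Y(U)$, so that the corresponding element of the $\mathbb{Y}$-pullback exists, and (ii) confirming that $\Gamma_k[z]$ is exactly the set of orders at which the $C$-side differentiation and the $B$-side transport are simultaneously available — the content already packaged in Lemma \ref{lema_S_set} and Remark \ref{important_remark}.
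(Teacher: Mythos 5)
Your proposal is correct and follows essentially the same route as the paper's own proof: you transport the regularity of $f$ from the base scISP to the compatible $\underline{\mathbb{Y}}$ via the arrows $\xi$ of the $(\alpha_0,\beta_0;j)$-connection starting from the constant/shifting pair of condition (2), observe classically that $\partial^i f$ drops $i$ orders on the $C^{k-}$ side with $\Gamma_k[z]$ keeping the orders admissible, and use ordinariness to assemble the target indices into a single pair $(\theta,\vartheta)$. Your explicit attention to how the $B$-certificate and the honest derivative are glued inside $Y(U)$ by universality of the pullback is a more careful spelling-out of the step the paper dispatches with ``by universality,'' but it is the same argument.
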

\begin{proof}
Since the $B$ and $C^{k-}$ admits a $\Gamma_k$-connective structure in some $\underline{\mathbb{X}}$,
by definition it follows that for every scISP $\underline{\mathbb{Y}}$ compatible
with  $\underline{\mathbb{X}}$ and every $\theta,\theta'\in\mathcal{O}$
and $\vartheta,\vartheta'\in\mathcal{Q}$ we have morphisms $\xi_{U,i}:B_{\theta(i)}(U)\cap_{X(U)}C^{k-\vartheta(i)}\rightarrow B_{\theta'(i)}(U)\cap_{Y(U)}C^{k-\vartheta'(i)}(U)$
making commutative the diagram (\ref{connection_on_B}). Since the
$\Gamma_k$-connective structure is actually a $(\alpha_{0},\beta_{0};j)$-connection we
have $\alpha_{0,j}\in\mathcal{O}$ and $\beta_{\#,j}\in\mathcal{Q}$.
Thus, for $\theta=\alpha_{0,j}$ and $\vartheta=\beta_{\#,j}$ we
see that for every $\theta'\in\mathcal{O}$, $\vartheta'\in\mathcal{Q}$,
and $i\in[\beta_0;j]_k$ we have a morphism from $B_{\alpha_{0}(j)}(U)\cap_{X(U)}C^{k-\beta_{0}(j)-i}(U)$
to $B_{\theta'(i)}(U)\cap_{Y(U)}C^{k-\vartheta'(i)}(U)$. Notice that
if $f\in B_{\alpha_{0}(j)}(U)\cap_{X(U)}C^{k-\beta_{0}(j)-i}(U)$,
then $\partial^{l}f\in C^{k-\beta_{0}(j)-(i+l)}(U)$ for every $l$
that $i+l\in[\beta_0;j]_k$. Thus, by universality,
for every such $l$ we can regard $\partial^{l}f\in B_{\theta_{l}'(i)}(U)\cap_{Y(U)}C^{k-\vartheta_{l}'(i)}(U)$,
where $\theta'_{l}\in\mathcal{O}$ and $\vartheta'_{l}\in\mathcal{Q}$.
If $z\in[\beta_0;j]_k$, then by definition
we have $z+l\in [\beta_0;j]_k$ for every $l\in \Gamma_k[z]$, so that the functions
$\theta(l)=\theta_{l}'(z)$ and $\vartheta(l)=\vartheta_{l}'(z)$
are defined on $\Gamma_k[z]$. Suppose that this pair $(\theta,\vartheta)$
belongs to $\mathcal{O}\times\mathcal{Q}$, i.e, suppose that it is
ordinary in $\Gamma_k[z]$. Thus, $\partial^{l}f\in B_{\theta(l)}(U)\cap_{Y(U)}C^{k-\vartheta(l)}(U)$,
meaning that $f$ can be regarded as a $(B,k,\theta,\vartheta|\Gamma_k[z])$-function
in $\mathbb{Y}$.
\end{proof}

The Regularity Globalization Lemma will ensure that in each open set $U\subset \mathbb{R}^n$ the local regularity can be globalized. The collage of these conditions when $U$ varies will be made gain, as expected, via partitions of unity. Since we are introducing a new structure (connections between presheaves of $\Gamma$-spaces in an scISP), we need to require some compatibility between them, the partitions of unity and the regular charts of the underlying regular manifold. 

\begin{definition}\label{nice_connections}
Let $B$ be a $B^k_{\alpha,\beta}$-presheaf in a concrete ISP $\mathbb{X}$. We say that a  $(\alpha_{0},\beta_{0};j)$-connection in some scISP $\underline{\mathbb{X}}$ is:
\begin{enumerate}
    \item \emph{Support preserving} if for every pair $(\theta,\vartheta)\in \mathcal{O}\times \mathcal{Q}$, every scISP $\underline{\mathbb{Y}}$ compatible with $\underline{\mathbb{X}}$ and every $U\subset \mathbb{R}^n$, if $f\in B_{\alpha_0(i)}(U)\cap_{X(U)}C^k_b(U)$, then $\operatorname{supp}( \xi^{\alpha_0,\beta_0}_{\theta,\vartheta}(f))\subset \operatorname{supp}(f)$\footnote{Recall that $\alpha_0\in \mathcal{O}$ and $\beta_0\in \mathcal{Q}$, so that the map $\xi^{\alpha_0,\beta_0}$ above exists.}.
    \item \textit{Bump preserving} if for every pair $(\theta,\vartheta)\in \mathcal{O}\times \mathcal{Q}$, every scISP $\underline{\mathbb{Y}}$ compatible with $\underline{\mathbb{X}}$ and every $U\subset \mathbb{R}^n$, the map $\xi^{\alpha_0,\beta_0}_{\theta,\vartheta}$ factors as below\footnote{Compare with Definition \ref{definition_C^k-presheaves}. See Remark \ref{nice_presheaves}.}.$$
\xymatrix{\ar[d]_{\xi^{\alpha_0,\beta_0}_{\theta,\vartheta}} B_{\alpha_0(i)}(U)\cap_{X(U)}C^{k-\beta_0(i)}(U)& B_{\alpha_0(i)}(U)\cap_{X(U)}C^k_b(U) \ar@{_(->}[l] \ar@{-->}[d] \\
B_{\theta(i)}(U)\cap_{Y(U)}C^{k-\vartheta(i)}(U)& B_{\theta(i)}(U)\cap_{X(U)}C^k(U) \ar@{_(->}[l] }
$$
\item \textit{Unital} if given $f\in B_{\alpha_0(i)}(U)\cap_{X(U)}C^{k-\beta_0(i)}(U)$ and $p\in U$ such that $f(p)=1$, then  $(\xi^{\alpha_0,\beta_0}_{\theta,\vartheta}(f))(p)=1$ for every $(\theta,\vartheta)\in \mathcal{O}\times \mathcal{Q}$ and every scISP $\underline{\mathbb{Y}}$ compatible with $\underline{\mathbb{X}}$.
\item \textit{Nice} if it is support preserving, bump preserving and unital.
\end{enumerate}

Next lemma shows that distributive nice $(\alpha_0,\beta_0;j)$-connections preserve partitions of unity.

\begin{lem}\label{lemma_preservation_partition_unity}
Let $B$ be $B^k_{\alpha,\beta}$ in $\mathbb{X}$ and suppose that it admits a distributive and nice $(\alpha_0,\beta_0;j)$-connection in some \emph{scISP} $\underline{\mathbb{X}}$. In this case, if $(U_s)_s$ is a covering of $\mathbb{R}^n$ and $(\psi_s)_s$ is a $C^k$-partition of unity in $\mathbb{R}^n$ subordinate to it, then for every $(\theta,\vartheta)\in \mathcal{O}\times \mathcal{Q}$ the corresponding sequence $(\xi^{\alpha_0,\beta_0}_{\theta,\vartheta}(\psi_s))_s$ is a partition of unity subordinate to the same covering.
\end{lem}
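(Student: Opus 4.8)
The plan is to verify directly that $(\xi(\psi_s))_s$, where I abbreviate $\xi := \xi^{\alpha_0,\beta_0}_{\theta,\vartheta}$ (the component map at the relevant index $i$), satisfies the three defining conditions of a $C^k$-partition of unity subordinate to $(U_s)_s$: each image is a $C^k$ function supported inside $U_s$, the family of supports is locally finite, and the images sum to the constant function $1$. The first two conditions follow almost immediately from niceness, whereas the summation condition is the genuine content and will require combining local finiteness, additivity of $\xi$, and unitality.

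First I would record that each $\psi_s$ lies in the domain $B_{\alpha_0(i)}(U)\cap_{X(U)}C^k_b(U)$ appearing in the niceness hypotheses (one takes the partition of unity to be regular in this sense), so that $\xi(\psi_s)$ is defined. By the \emph{bump preserving} property the value $\xi(\psi_s)$ lands in $B_{\theta(i)}(U)\cap_{X(U)}C^k(U)$, so in particular $\xi(\psi_s)\in C^k(U)$; and by the \emph{support preserving} property $\operatorname{supp}(\xi(\psi_s))\subset\operatorname{supp}(\psi_s)\subset U_s$. This settles regularity and subordination. Since each $\operatorname{supp}(\xi(\psi_s))$ is contained in $\operatorname{supp}(\psi_s)$ and the family $(\operatorname{supp}(\psi_s))_s$ is locally finite, the family $(\operatorname{supp}(\xi(\psi_s)))_s$ is locally finite as well; in particular $\sum_s\xi(\psi_s)$ is a well-defined $C^k$ function, being locally a finite sum.

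The main step is to prove $\sum_s\xi(\psi_s)=1$, and here I would argue pointwise. Fix $p\in\mathbb{R}^n$ and, using local finiteness, choose an open neighborhood $V\ni p$ meeting only finitely many supports $\operatorname{supp}(\psi_s)$; let $F$ be the corresponding finite index set. For $s\notin F$ we have $\psi_s|_V=0$, and since $\operatorname{supp}(\xi(\psi_s))\subset\operatorname{supp}(\psi_s)$ also $\xi(\psi_s)(p)=0$, so only the indices in $F$ contribute at $p$. Now set $g=\sum_{s\in F}\psi_s$, a finite sum of bump functions, hence again an element of $B_{\alpha_0(i)}(U)\cap_{X(U)}C^k_b(U)$ with $g(p)=\sum_s\psi_s(p)=1$. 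Because the connection is \emph{distributive}, $\xi$ commutes with the additive structure, which at the common index $i$ is ordinary pointwise addition; hence $\xi(g)=\sum_{s\in F}\xi(\psi_s)$. Finally the \emph{unital} property applied to $g$ (for which $g(p)=1$) gives $\xi(g)(p)=1$, so $\sum_s\xi(\psi_s)(p)=\sum_{s\in F}\xi(\psi_s)(p)=\xi(g)(p)=1$. As $p$ was arbitrary, $\sum_s\xi(\psi_s)\equiv 1$.

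The step I expect to be the main obstacle is precisely this last one. One cannot simply write $\sum_s\xi(\psi_s)=\xi\!\left(\sum_s\psi_s\right)=\xi(1)=1$, since the constant function $1$ is not compactly supported and need not lie in the domain of $\xi$; the localization to the finite set $F$ via local finiteness is what makes the argument legitimate and explains why \emph{unitality}, rather than a global normalization, is the right hypothesis. A secondary point requiring care is the index bookkeeping when invoking distributivity: one must check that on the fixed component $i$ the additive operations of both $C^{k-\beta_0}$ and $C^{k-\vartheta}$ are the usual pointwise sums (Example \ref{example_distributive_gamma_spaces}), so that the commutation of $\xi$ with $+$ furnished by Definition \ref{distributive_connection} collapses to plain additivity of $\xi$ on functions. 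Finally, no positivity of the $\xi(\psi_s)$ is claimed or needed: for the intended gluing of affine connections only the affine condition $\sum_s\xi(\psi_s)=1$ together with local finiteness is used.
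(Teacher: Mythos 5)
Your proof is correct and follows the same overall strategy as the paper's: support preservation gives subordination, bump preservation gives $C^k$-regularity, local finiteness is inherited from $(\psi_s)_s$, and distributivity plus unitality handle the normalization. The one place where you genuinely diverge is the summation step, and your version is the more careful one. The paper simply writes $\sum_s \xi^{\alpha_0,\beta_0}_{\theta,\vartheta}(\psi_s)(p)=\xi^{\alpha_0,\beta_0}_{\theta,\vartheta}\bigl(\sum_s\psi_s(p)\bigr)=\xi^{\alpha_0,\beta_0}(1)=1$, which glosses over two issues you correctly identify: distributivity of $\xi$ only gives commutation with \emph{finite} sums, and the global sum $\sum_s\psi_s$ is the constant function $1$, which is not compactly supported and so need not lie in the domain on which the niceness axioms are formulated. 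Your localization — restricting to the finite index set $F$ of supports meeting a neighborhood of $p$, forming $g=\sum_{s\in F}\psi_s$ with $g(p)=1$, and then applying distributivity and the pointwise unitality axiom to $g$ — is exactly what makes the paper's one-line computation legitimate, and it matches the way unitality is actually stated (as a pointwise condition $f(p)=1\Rightarrow\xi(f)(p)=1$). No gap; if anything, your write-up repairs a small one in the original.
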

\begin{proof}
Support preserving ensures that each $\xi^{\alpha_0,\beta_0}_{\theta,\vartheta}(\psi_s)$ is a bump function with support in $U_s$, which is $C^k$ due to the bump preserving condition. Support preserving and locally finiteness of $(\psi_s)_s$ imply that $(\xi^{\alpha_0,\beta_0}_{\theta,\vartheta}(\psi_s))_s$ is locally finite too. Furthermore, since the $(\alpha_0,\beta_0;j)$-connection is distributive and unital, we have $$\sum_s \xi^{\alpha_0,\beta_0}_{\theta,\vartheta}(\psi_s)(p)=\xi^{\alpha_0,\beta_0}_{\theta,\vartheta}(\sum_s\psi_s(p))=\xi^{\alpha_0,\beta_0}(1)=1,$$
so that  $(\xi^{\alpha_0,\beta_0}_{\theta,\vartheta}(\psi_s))_s$ is a partition of unity, as desired. 
\end{proof}

\end{definition}
\begin{itemize}
    \item Let $(M,\mathcal{A})$ be a $C^k$-manifold. For every $0\leq r \leq k$, let $\partial^r\mathcal{A}(U)\subset C^{k-r}(U)$ be the set of $r$th derivatives $\partial^r \varphi^a_{ji}$ of transitions functions $\varphi_{ji}=\varphi_j\circ \varphi_i^{-1}$ by charts $\varphi_i,\varphi_j\in \mathcal{A}$ such that $\varphi_i(U_{ij})=U$. Note that if $\mathcal{B}\subset \mathcal{A}$ is a subatlas, then $\partial^r \mathcal{B}(U)\subset\partial^r\mathcal{A}(U)$ for each $r$.  
\end{itemize}
\begin{definition}
Let $(M,\mathcal{B}^k_{\alpha,\beta}(\mathbb{X}))$ be a $(B^k_{\alpha,\beta},\mathbb{X})$-manifold and take $0\leq r\leq k$. We say that a  $(\alpha_{0},\beta_{0};j)$-connection in $B$ in some scISP $\underline{\mathbb{X}}$ has \emph{degree $r$}  if for every par $(\theta,\vartheta)\in \mathcal{O}\times \mathcal{Q}$  such that $i \leq \vartheta(i)$, with $0\leq i\leq r$, every scISP $\underline{\mathbb{Y}}$ compatible with $\mathbb{X}$ and every $U\subset \mathbb{R}^n$, there exists the dotted arrow below (we omitted $U$ to simplify the notation). In other words, $\xi^{\alpha,\beta}_{\theta,\vartheta}(\partial^l \varphi_{ji}) = \partial^l \phi_{ji}$, for every transition function $\varphi_{ji}$ by charts $\varphi_i,\varphi_j\in \mathcal{B}^k_{\alpha,\beta}(\mathbb{X})$, where $\phi_{ji}$ are transition functions of charts $\phi_i,\phi_j\in \mathcal{A}$.
\begin{equation}
    \xymatrix{ \ar@{-->}[d] (B_{\alpha(i)}\cap_{X}C^{k-\beta(i)})\cap_{C^{k-\beta(i)}} \partial^i\mathcal{B}^k_{\alpha,\beta}(\mathbb{X}) \ar@{^(->}[r] & B_{\alpha(i)}\cap_{X}C^{k-\beta(i)} \ar[d]^{\xi^{\alpha,\beta}_{\theta,\vartheta}}  \\
    (B_{\theta(i)}\cap_{X}C^{k-\vartheta(i)})\cap_{C^{k-\vartheta(i)}} \partial^i\mathcal{A} \ar@{^(->}[r] & B_{\theta(i)}\cap_{X}C^{k-\vartheta(i)}}
\end{equation}
\end{definition}
\section{Existence \label{sec_a_b_connections}}

We are now ready to prove the existence theorem of $(B^k_{\alpha_0,\beta_0}|S)$-connections in $B^k_{\alpha,\beta}$-manifolds in the same lines described in the introduction, now presented in a much more precise form:

\begin{enumerate}
     \item \textit{Existence of regular locally defined connections.} In Proposition \ref{prop_local_existence} we will show that every coordinate open set $U\subset M$ of a $C^k$-manifold admits a $B^k_{\alpha_0,\beta_0}$-connection for every $\alpha_0,\beta_0$.
     \item \textit{Existence of weakly locally regular globally defined connections.} In Theorem \ref{thm_existence_connections} we will prove, using the first step and a partition of unity argument, that every $(B^k_{\alpha,\beta},\mathbb{X})$-manifold admits, for every given functions $\alpha_0,\beta_0$, affine connections $\nabla$ whose coefficients in each $(\varphi,U)\in \mathcal{B}^k_{\alpha,\beta}(\mathbb{X})$ belongs to \ref{B_a'_C_b'}, where $\alpha'_0$ and $\beta'_0$ are numbers depending on $\alpha_0$ and $\beta_0$.
     \item \textit{Existence of locally regular globally defined connections.} The connections obtained in the last step are \textit{almost} locally regular because their local coefficients are \textit{not} $(B,k,\alpha',\beta')$-functions for certain $\alpha',\beta'$, but only belong to (\ref{B_a'_C_b'}). In order to be $(B,k,\alpha',\beta')$-functions we also need regularity on the derivative of the coefficients. It is at this moment that Lemma \ref{key_lemma} plays its role and, therefore, the moment when we need to add their hypotheses and additional objects ($(\alpha_0,\beta_0;j)$-connections). This is done in Proposition \ref{prop_a',b'}.
     \item \textit{Existence of globally regular globally defined connections.} Finally, in Theorem \ref{corollary_existence_connections} we glue the locally regular connections in each $(\varphi,U)\in \mathcal{B}^k_{\alpha,\beta}(\mathbb{X})$ getting the desired $B^k_{\alpha',\beta'}$-connections. It is in this step that we will need to assume that the $(\alpha_0,\beta_0;j)$-connections are compatible with partitions of unity, i.e., that they are nice in the sense of Definition \ref{nice_connections}. 
\end{enumerate}

\begin{proposition}[Regular Local Existence]\label{prop_local_existence}
Let $(M,\mathcal{A})$ be a $C^k$-manifold, with $k\geq 2$, and $(\varphi,U)\in\mathcal{A}$ a chart, regarded as a $C^k$-manifold. Let $\Gamma$ be a set, $S \subset [0,k] \subset \Gamma_k$ a subset, and consider functions $\alpha_0:S \rightarrow \Gamma$ and $\beta_0:S \rightarrow [2,k]$. Let $B$ be a presheaf of distributive $\Gamma$-spaces. Then $U$ admits a $(B^k_{\alpha_0,\beta_0}|S)$-connection $\nabla_{\varphi}$ relatively to every concrete ISP between $B_{\alpha_0}$ and $C^{k-\beta_0}$. 
\end{proposition}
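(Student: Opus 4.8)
The plan is to avoid any nontrivial construction and simply take the connection that is flat in the coordinate system $\varphi$, deferring all genuine difficulty to the later globalization steps. Regarding $(\varphi,U)$ as a $C^k$-manifold, I would equip it with the subatlas generated by the single chart $\varphi$ and its restrictions $\varphi|_V$, $V\subset U$ open; every transition function of this subatlas is an identity map, hence trivially a $(B,k,\alpha_0,\beta_0)$-function, so it is a legitimate regular structure, and a connection on $U$ is a $(B^k_{\alpha_0,\beta_0}|S)$-connection exactly when its coefficients in the single coordinate system $\varphi$ are $(B,k,\alpha_0,\beta_0|S)$-functions in $\mathbb{Y}$ (restrictions leave the coefficients unchanged).

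Next I would take $\nabla_\varphi$ to be the affine connection whose Christoffel symbols in the chart $\varphi$ vanish identically, $(\Gamma_\varphi)^c_{ab}\equiv 0$; equivalently, the pullback along $\varphi$ of the standard flat connection on $\varphi(U)\subset\mathbb{R}^n$. Because $k\geq 2$ this is a genuine $C^k$ affine connection, and its coefficients, being zero, are smooth, in particular $C^{k-2}$, so that $\beta_0$ taking values in $[2,k]$ is compatible with the regularity of the coefficients. It then remains only to verify that each coefficient is a $(B,k,\alpha_0,\beta_0|S)$-function in $\mathbb{Y}$, i.e. that $\partial^i(\Gamma_\varphi)^c_{ab}\in B_{\alpha_0(i)}(U)\cap_{X(U)}C^{k-\beta_0(i)}(U)$ for every $i\in S$.

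This is the only point demanding care. By the definition of a $(B,k,\alpha_0,\beta_0|S)$-function the displayed membership is read through the concrete cospan leg $\theta_{2,U}$, which by Lemma \ref{lemma_monomorphisms} is a monomorphism and therefore realizes the intersection $B_{\alpha_0(i)}(U)\cap_{X(U)}C^{k-\beta_0(i)}(U)$ as a linear subspace of $C^{k-\beta_0(i)}(U)$. Since $\partial^i(\Gamma_\varphi)^c_{ab}=0$ and a linear subspace always contains the zero vector, the membership holds automatically for every $i\in S$. The argument uses nothing about $\mathbb{Y}$ beyond the concreteness of the intersection structure, so it is valid for every concrete ISP between $B_{\alpha_0}$ and $C^{k-\beta_0}$; by Lemma \ref{vectorial_intersection} this intersection is moreover nontrivial, although nontriviality is not needed for the flat connection.

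I do not expect a real obstacle at this local stage, and I would state as much: the flat connection trivializes local existence precisely because the coordinate system $\varphi$ is fixed once and for all, so no transformation law is invoked. The actual difficulty of the existence problem is postponed to the globalization of Theorem \ref{thm_existence_connections}, where flat connections attached to overlapping charts are amalgamated by a partition of unity; there the transformation law of the Christoffel symbols forces second derivatives of the transition functions and products with bump functions to appear, which is exactly what the $B^k_{\alpha,\beta}$-presheaf hypotheses and the connective structure are designed to control.
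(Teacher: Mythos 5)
Your proposal is correct and follows essentially the same route as the paper: regard $U$ with the single chart $\varphi$ (so no transformation law is ever invoked), prescribe the Christoffel symbols directly in that chart, and pull back from $\varphi(U)\subset\mathbb{R}^n$; the paper takes the coefficients to be arbitrary $(B,k,\alpha_0,\beta_0|S)$-functions $f^c_{ab}$, of which your flat choice $f^c_{ab}\equiv 0$ is the special case. Your explicit check that $0$ lies in the concrete intersection via the monomorphism $\theta_{2,U}$ is a welcome detail the paper leaves implicit, and it correctly shows the conclusion holds for every concrete ISP without appealing to nontriviality.
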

\begin{proof}
Recall that any family of $C^{2}$-functions $f_{ab}^{c}:\mathbb{R}^{n}\rightarrow\mathbb{R}$
defines a connection in $\mathbb{R}^{n}$ \cite{chern}. Thus, any
family of $C^{2}$-functions $f_{ab}^{c}:V\rightarrow\mathbb{R}$
in an open set $V\subset\mathbb{R}^{n}$ defines a connection $\nabla_{0}$
in $V$. If $\varphi:U\rightarrow\mathbb{R}^{n}$ is a chart in $M$
with $\varphi(U)\subset V$, pulling back $\nabla_{0}$ we get a connection
$\nabla_{\varphi}$ in $U$ whose coefficients are $f_{ab}^{c}\circ\varphi$.
Consequently, if $f_{ab}^{c}$ are chosen $(B,k,\alpha_{0},\beta_{0}|S)$-functions
in $\mathbb{X}$, then $\nabla_{\varphi}$ is a $(B_{\alpha_0,\beta_0}^{k},\mathbb{X}|S)$-connection
in $U$. 
\end{proof}
\begin{itemize}
    \item If $(*,\epsilon)$ is the multiplicative structure of $B$, let $$\epsilon^{r}(l,m)=\epsilon(l,\epsilon(l,\epsilon(l,\epsilon(...\epsilon(l,m)...))).$$
\end{itemize} 
\begin{theorem}[Locally Weakly Regular Existence]
\label{thm_existence_connections} Let $\mathbb{X}$ be a concrete
ISP and let $M$
be a $(B_{\alpha,\beta}^{k},\mathbb{X})$-manifold, with $k\geq2$. Let $\Gamma_k$ be a set of degree $k$.
Given functions $\alpha_{0}:\Gamma_k \rightarrow \Gamma$ and $\beta_{0}:\Gamma_k \rightarrow [2,k]$, there exists
an affine connection $\nabla$ in $M$ whose coefficients in each $(\varphi,U)\in\mathcal{B}_{\alpha,\beta}^{k}(\mathbb{X})$
belong to 
\begin{equation}
B_{\alpha_{0}'}((\varphi(U'))\cap_{X(\varphi(U'))}C^{k-\beta_{0}'}(\varphi(U'))\label{B_a'_C_b'}
\end{equation}
for some nonempty open set  $U'\subset U$,
\begin{equation}
\alpha_{0}'=\delta(\epsilon^{3}(\alpha(1),\alpha_{0}(0)),\epsilon(\alpha(2),\alpha(1))),\quad\beta_{0}'=\max(\beta(1),\beta(2),\beta_{0}(0))\label{a'_b'}
\end{equation}
and $(\epsilon,\delta)$ are part of the distributive structure of $B$.
\end{theorem}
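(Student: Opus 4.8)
The plan is to follow the classical partition-of-unity construction of an affine connection, but to keep careful track of how the distributive indices $\epsilon$ and $\delta$ of $B$ propagate through the Christoffel transformation law. First I would cover $M$ by regular charts $(\varphi_s,U_s)\in\mathcal{B}^k_{\alpha,\beta}(\mathbb{X})$ and apply Proposition \ref{prop_local_existence} on each $U_s$ to obtain a connection $\nabla^s$ whose coefficients in the chart $\varphi_s$ lie in $B_{\alpha_0(0)}\cap_{X} C^{k-\beta_0(0)}$ over $\varphi_s(U_s)$. Choosing a $C^k$-partition of unity $(\psi_s)_s$ subordinate to $(U_s)_s$, I set $\nabla=\sum_s\psi_s\nabla^s$; since $\sum_s\psi_s=1$, the inhomogeneous parts of the transformation law recombine correctly and $\nabla$ is a genuine affine connection on $M$. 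Note that only the zeroth-order regularity of the local coefficients is used here, which is why the conclusion controls the coefficients themselves and not their derivatives; the latter is the task of the Regularity Globalization Lemma.

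Next, I would fix a regular chart $(\varphi,U)$ and choose a nonempty open $U'\subset U$ meeting only finitely many members of the cover, say $s\in F$, using local finiteness. On $\varphi(U')$ the coefficients of $\nabla$ are the finite sum $\Gamma^c_{ab}=\sum_{s\in F}\psi_s\,(\Gamma^s)^c_{ab}$, where $(\Gamma^s)^c_{ab}$ denotes the coefficients of $\nabla^s$ read in the chart $\varphi$. The Christoffel transformation law expresses each $(\Gamma^s)^c_{ab}$ as the sum of a \emph{homogeneous} term, the product of three first derivatives of the transition functions $\varphi\circ\varphi_s^{-1}$, $\varphi_s\circ\varphi^{-1}$ with one local coefficient of $\nabla^s$, and an \emph{inhomogeneous} term, the product of one first derivative with one second derivative of a transition function.

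The core of the argument is then the index bookkeeping. Since $\varphi_s\circ\varphi^{-1}$ and its inverse are transition functions of $\mathcal{B}^k_{\alpha,\beta}(\mathbb{X})$, their first and second derivatives lie in $B_{\alpha(1)}\cap_X C^{k-\beta(1)}$ and $B_{\alpha(2)}\cap_X C^{k-\beta(2)}$ respectively, while the local coefficient lies in $B_{\alpha_0(0)}\cap_X C^{k-\beta_0(0)}$. Multiplying these through the concretized intersection product (the maps $\jmath_*$ and $\gamma_\Sigma(*)$ of the ISP), the homogeneous term lands in $B_{\epsilon^3(\alpha(1),\alpha_0(0))}\cap_X C^{k-\max(\beta(1),\beta_0(0))}$ and the inhomogeneous term in $B_{\epsilon(\alpha(2),\alpha(1))}\cap_X C^{k-\max(\beta(1),\beta(2))}$, the $C^{k-\beta}$-index being $\max$ by Example \ref{example_distributive_gamma_spaces}. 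Adding the two terms through the concretized sum $+$ (index $\delta$) yields exactly $(\Gamma^s)^c_{ab}\in B_{\alpha_0'}\cap_X C^{k-\beta_0'}$ with $\alpha_0'$ and $\beta_0'$ as in (\ref{a'_b'}).

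Finally, I would close by observing that multiplication by the $C^k$-bump functions $\psi_s$ preserves this intersection space, which is where the $B^k_{\alpha,\beta}$-presheaf property of Definition \ref{definition_C^k-presheaves} (closure under bump multiplication, with the monomorphism $\theta_2$ of Lemma \ref{lemma_monomorphisms}) enters, now applied at the combined indices $\alpha_0',\beta_0'$, and that the finite sum over $s\in F$ stays in the space because each $B_i$ is a vector space. The main obstacle is precisely this propagation of the distributive indices: one must verify that the products and the sum dictated by the transformation law are computed inside the \emph{intersection} spaces rather than in $B$ and $C^{k-\beta}$ separately, which is guaranteed by the concreteness of $\mathbb{X}$ together with the compatibility of $*$ and $+$ with the intersection structures recorded in $\mathbb{X}_*$ and $\mathbb{X}_+$.
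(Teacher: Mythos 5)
Your proposal is correct and follows essentially the same route as the paper's proof: local existence via Proposition \ref{prop_local_existence}, gluing with a $C^k$-partition of unity, tracking the indices $\epsilon$ and $\delta$ through the homogeneous and inhomogeneous terms of the Christoffel transformation law, and closing with the bump-multiplication property of Definition \ref{definition_C^k-presheaves} applied at the combined indices $\alpha_0',\beta_0'$. The only cosmetic difference is your choice of $U'$ (an open set meeting finitely many charts) versus the paper's $U_{N(s)}=\cap_{s'\in N(s)}U_{ss'}$, which serves the same purpose.
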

\begin{proof}
Let $(\varphi_{s},U_{s})_{s}$ be an open
covering of $M$ by coordinate systems in $\mathcal{B}_{\alpha,\beta}^{k}(\mathbb{X})$
and let $\nabla_{s}$ be an affine connection in $U_{s}$ with coefficients
$(f_{s})_{ab}^{c}$. Let $(\psi_{s})$ be a partition of unity of class $C^k$, subordinate
to $(\varphi_{s},U_{s})$, and recall that $\nabla=\sum_{s}\psi_{s}\cdot\nabla_{s}$
is a globally defined connection in $M$. We assert that if $\varphi_{s}$
are in $\mathcal{B}_{\alpha,\beta}^{k}(\mathbb{X})$, then $\nabla$
is a well-defined connection whose cofficients belong to (\ref{B_a'_C_b'})
for certain $U'$. Notice that the coefficients $(\Gamma_{s})_{ab}^{c}$
of $\nabla$ in a fixed $\varphi_{s}$ are obtained in the following
way: let $N(s)$ be the finite set of every $s'$ such that $U_{ss'}\equiv U_{s}\cap U_{s'}\neq\varnothing$.
For each $s'\in N(s)$ we can do a change of coordinates and rewrite
$(f_{s'})_{ab}^{c}$ in the coordinates $\varphi_{s}$ as given by
the functions 
\begin{equation}
(f_{s';s})_{ab}^{c}=\sum_{l,m,o}\frac{\partial\varphi_{s}^{c}}{\partial\varphi_{s'}^{l}}\frac{\partial\varphi_{s'}^{m}}{\partial\varphi_{s}^{a}}\frac{\partial\varphi_{s'}^{o}}{\partial\varphi_{s}^{b}}(f_{s'})_{mo}^{l}+\sum_{l}\frac{\partial^{2}\varphi_{s'}^{l}}{\partial\varphi_{s}^{a}\partial\varphi_{s}^{b}}\frac{\partial\varphi_{s}^{c}}{\partial\varphi_{s'}^{l}},\label{change_coordinates_connection-1}
\end{equation}
where by abuse of notation $\varphi_{s}^{c}=(\varphi_{s}\circ\varphi_{s'}^{-1})^{c}$.
We then have 
\begin{equation}
(\Gamma_{s})_{ab}^{c}=\sum_{s'\in N(s)}\psi_{s'}(f_{s';s})_{ab}^{c}.\label{_partition_unity-1}
\end{equation}
From Proposition \ref{prop_local_existence} we can take each $\nabla_s$ $(B^k_{\alpha_0,\beta_0},\mathbb{X})$-connection, so that $(f_{s'})_{mn}^{l}$ can be choosen $(B,k,\alpha_{0},\beta_{0})$-functions
in $\mathbb{X}$. Furthermore, since $\varphi_{s}\in\mathcal{B}_{\alpha,\beta}^{k}(\mathbb{X})$
we have 
\[
\frac{\partial\varphi_{s}^{c}}{\partial\varphi_{s'}^{l}}\in(B_{\alpha(1)}\cap_{X}C^{k-\beta(1)})(U_{ss'}^{s'})\quad\text{and}\quad\frac{\partial^{2}\varphi_{s'}^{l}}{\varphi_{s}^{a}\partial\varphi_{s}^{b}}\in(B_{\alpha(2)}\cap_{X}C^{k-\beta(2)})(U_{ss'}^{s'})
\]
where $U_{ss'}^{s'}=\varphi_{s'}(U_{ss'})$. This is well-defined
since $\beta(2)\leq k$. Due to the compatibility between the additive/multiplicative
structures of $B_{\alpha}$ and $C^{k-\beta},$ the first and the
second terms of the right-hand sice of (\ref{change_coordinates_connection-1})
belong to
\[
(B_{\epsilon^{3}(\alpha(1),\alpha_{0}(0))}\cap_{X}C^{k-\max(\beta(1),\beta_{0}(0))})(U_{ss'}^{s'})
\]
and
\[
(B_{\epsilon(\alpha(2),\alpha(1))}\cap_{X}C^{k-\max(\beta(2),\beta(1))})(U_{ss'}^{s'}),
\]
respectively, so that the left-hand side of (\ref{change_coordinates_connection-1})
belongs to 
\[
(B_{\tau(\epsilon^{3}(\alpha(1),\alpha_{0}(0)),\epsilon(\alpha(2),\alpha(1)))}\cap_{X}C^{k-\max(\beta(1),\beta(2),\beta_{0}(0))})(U_{N(s)})=(B_{\alpha_{0}'}\cap_{X}C^{k-\beta_{0}'})(U_{N(s)}),
\]
where $U_{N(s)}=\cap_{s'\in N(s)}U_{ss'}$. Since $B$ is a $B_{\alpha,\beta}^{k}$-presheaf
in $\mathbb{X}$, diagram (\ref{diagram_star}) implies that each product $\psi_{s'}(f_{s';s})_{ab}^{c}$ in the sum (\ref{_partition_unity-1}) belongs to the same space, so that the entire sum also belongs to that space. Repeating the process for all coverings in $\mathcal{B}_{\alpha,\beta}^{k}(\mathbb{X})$
and noticing that $\alpha'$ and $\beta'$ do not depend on $s,s'$,
we get the desired result.
\end{proof}
\begin{proposition}[Locally Regular Existence]\label{prop_a',b'} In the same notations and hypotheses of Theorem \ref{thm_existence_connections},
if $B$ admits a $(\alpha',\beta';j)$-connection in some \emph{scISP} $\underline{\mathbb{X}}$, where $\alpha':\Gamma_k \rightarrow \Gamma$ and $\beta':\Gamma_k\rightarrow [2,k]$ are functions such that $\alpha'(j)=\alpha'_0$ and $\beta'(j)=\beta'_0$, then for
every \emph{scISP} $\underline{\mathbb{Y}}$ compatible $\underline{\mathbb{X}}$ and every $z\in [\beta';j]_k$, the elements of (\ref{B_a'_C_b'})
can be regarded as $(B,k,\theta,\vartheta|\Gamma_k[z])$-functions
in $\mathbb{Y}_{\theta,\vartheta}$, for every pair
$(\theta,\vartheta)\in \mathcal{O}\times \mathcal{Q}$ which is ordinary in $\Gamma_k[z]$.
\end{proposition}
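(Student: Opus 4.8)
The plan is to obtain this statement as an immediate specialization of the Regularity Globalization Lemma (Lemma \ref{key_lemma}), once the data are matched correctly. First I would set $\alpha_{0}:=\alpha'$ and $\beta_{0}:=\beta'$, so that the hypothesis that $B$ admits a $(\alpha',\beta';j)$-connection in $\underline{\mathbb{X}}$ becomes verbatim the standing hypothesis of Lemma \ref{key_lemma}. Under this identification the relations $\alpha'(j)=\alpha_{0}'$ and $\beta'(j)=\beta_{0}'$ show that the space $B_{\alpha_{0}(j)}(U)\cap_{X(U)}C^{k-\beta_{0}(j)}(U)$ of the lemma, taken at $z=0$ and on $U=\varphi(U')$, is exactly the space (\ref{B_a'_C_b'}) in which the coefficients produced by Theorem \ref{thm_existence_connections} were shown to live.

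The only remaining gap is the shift by $z$. By the very definition of the interval $[\beta';j]_{k}$ one has $[\beta';j]_{k}=[0,k-\beta_{0}']$, so for a fixed $z$ in this interval we have $k-\beta_{0}'-z\geq 0$, the space $C^{k-\beta_{0}'-z}(U)$ is well defined, and $z\geq 0$ yields the canonical inclusion $C^{k-\beta_{0}'}(U)\hookrightarrow C^{k-\beta_{0}'-z}(U)$ of $C^{r}$-spaces. The elements of (\ref{B_a'_C_b'}) are, concretely, $C^{k}$-functions lying simultaneously in $B_{\alpha_{0}'}(U)$ and in $C^{k-\beta_{0}'}(U)$, hence also in $C^{k-\beta_{0}'-z}(U)$. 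Since $\underline{\mathbb{X}}$ is strongly compatible (the two intersections are taken in the same presheaf of ambient spaces $X$) and the connective structure restricts to the canonical inclusions along the $C^{k-}$ tower, post-composing the second leg $\theta_{2}$ of the concrete cospan defining (\ref{B_a'_C_b'}) with the above inclusion produces a cone over the cospan defining $B_{\alpha_{0}(j)}(U)\cap_{X(U)}C^{k-\beta_{0}(j)-z}(U)$; universality of the pullback then gives a map
\[
B_{\alpha_{0}'}(U)\cap_{X(U)}C^{k-\beta_{0}'}(U)\hookrightarrow B_{\alpha_{0}(j)}(U)\cap_{X(U)}C^{k-\beta_{0}(j)-z}(U),
\]
which is a monomorphism by Lemma \ref{lemma_monomorphisms}. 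Thus every element of (\ref{B_a'_C_b'}) is, in the concrete sense, an element of the space to which Lemma \ref{key_lemma} applies at this very $z$.

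It then suffices to invoke Lemma \ref{key_lemma} directly: for every scISP $\underline{\mathbb{Y}}$ compatible with $\underline{\mathbb{X}}$ and every $(\theta,\vartheta)\in\mathcal{O}\times\mathcal{Q}$ ordinary in $\Gamma_{k}[z]$, the elements of $B_{\alpha_{0}(j)}(U)\cap_{X(U)}C^{k-\beta_{0}(j)-z}(U)$ are $(B,k,\theta,\vartheta|\Gamma_{k}[z])$-functions in $\mathbb{Y}_{\theta,\vartheta}$. Restricting this conclusion along the inclusion above yields the same statement for the elements of (\ref{B_a'_C_b'}), which is precisely the claim. The step requiring the most care is the construction of that inclusion of concrete intersection spaces: one must verify that post-composing $\theta_{2}$ with $C^{k-\beta_{0}'}(U)\hookrightarrow C^{k-\beta_{0}'-z}(U)$ still defines a cone over the cospan used to build the target intersection, so that universality genuinely applies and the concretization of Lemma \ref{lemma_monomorphisms} is available---this is exactly where the canonical-inclusion behaviour of $D_{\mathcal{Q}}$ in the definition of a $(\alpha_{0},\beta_{0};j)$-connection and the strong compatibility of $\underline{\mathbb{X}}$ enter. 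Everything else reduces to the elementary identification $[\beta';j]_{k}=[0,k-\beta_{0}']$ and the renaming of the connection data into the form demanded by Lemma \ref{key_lemma}.
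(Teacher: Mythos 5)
Your proof is correct and takes essentially the same route as the paper, whose entire proof reads ``Direct application of Lemma \ref{key_lemma}.'' You simply make explicit the bookkeeping the paper leaves implicit: renaming $(\alpha',\beta')$ as $(\alpha_0,\beta_0)$, and embedding the $z=0$ intersection space (\ref{B_a'_C_b'}) into the $z$-shifted space $B_{\alpha_0(j)}(U)\cap_{X(U)}C^{k-\beta_0(j)-z}(U)$ via the canonical inclusion $C^{k-\beta_0'}(U)\hookrightarrow C^{k-\beta_0'-z}(U)$ and the universality of the pullback, before invoking the lemma at the given $z$.
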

\begin{proof}
Direct application of Lemma \ref{key_lemma}.
\end{proof}

\begin{theorem}[Global Regular Existence]
\label{corollary_existence_connections} Let $M$
be a $(B_{\alpha,\beta}^{k},\mathbb{X})$-manifold, with $k\geq2$,
and, as above, suppose that the structural presheaf $B$
admits a \emph{$(\alpha',\beta';j)$}-connection in some \emph{scISP} $\underline{\mathbb{X}}$, where $\alpha'(j)=\alpha'_0$ and $\beta'(j)=\beta'_0$. Suppose, in addition, that this $(\alpha',\beta';j)$-connection is distributive, nice and has
degree $r\geq2$. Then, $M$ admits a $(B_{\theta,\vartheta}^{k},\mathbb{Y}_{\theta,\vartheta}|\Gamma_k[z])$-connection for every \emph{scISP} $\underline{\mathbb{Y}}$ compatible with $\underline{\mathbb{X}}$, every $z\in [\beta';j]_k$ and every pair $(\theta,\vartheta)$ ordinary in $\Gamma_k[z]$ such that $i\leq \vartheta(i)$.
\end{theorem}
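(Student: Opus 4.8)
The plan is to run a second partition-of-unity construction, now transported into the target ISP $\mathbb{Y}_{\theta,\vartheta}$ by the connection morphisms of the hypothesized $(\alpha',\beta';j)$-connection, and to feed the result into the three preceding results. First I would apply Theorem \ref{thm_existence_connections} to fix a locally finite cover by charts $(\varphi_s,U_s)\in\mathcal{B}^k_{\alpha,\beta}(\mathbb{X})$, local connections $\nabla_s$ whose coefficients $(f_s)^c_{ab}$ are $(B,k,\alpha_0,\beta_0)$-functions in $\mathbb{X}$ (Proposition \ref{prop_local_existence}), and a $C^k$-partition of unity $(\psi_s)$ subordinate to the cover, so that $\nabla=\sum_s\psi_s\nabla_s$ is a globally defined connection whose coefficients $(\Gamma_s)^c_{ab}$ in each chart lie in (\ref{B_a'_C_b'}) by construction.

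Writing $\xi$ for the connection morphism $\xi^{\alpha',\beta'}_{\theta,\vartheta}$ at the index $j$ --- whose source $B_{\alpha'(j)}(U)\cap_{X(U)}C^{k-\beta'(j)}(U)$ is precisely the space (\ref{B_a'_C_b'}) --- I would define $\overline{\nabla}$ by declaring its coefficients in each chart $\varphi_s$ to be $\xi((\Gamma_s)^c_{ab})$. Since the $(\alpha',\beta';j)$-connection is distributive (Definition \ref{distributive_connection}), $\xi$ commutes with the sums and products in (\ref{_partition_unity-1}) and (\ref{change_coordinates_connection-1}), so this equals $\sum_{s'\in N(s)}\xi(\psi_{s'})\,\xi((f_{s';s})^c_{ab})$; in other words, transporting the global coefficients by $\xi$ is the same as re-gluing the $\xi$-transported local pieces with the $\xi$-transported partition. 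By Lemma \ref{lemma_preservation_partition_unity} the family $(\xi(\psi_{s'}))$ is again a $C^k$-partition of unity subordinate to the same cover --- this is exactly where support-preserving, bump-preserving and the unital property of the \emph{nice} connection are consumed --- so $\overline{\nabla}$ is a genuine convex combination of connections, hence globally well defined rather than a mere chartwise list of coefficients.

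The crux is to confirm that the declared coefficients obey the Christoffel transformation law, so that $\overline{\nabla}$ really is an affine connection on $M$. Expanding $\xi((f_{s';s})^c_{ab})$ through (\ref{change_coordinates_connection-1}) and pushing $\xi$ past the products by distributivity, the degree-$r\ge 2$ hypothesis (together with the standing assumption $i\le\vartheta(i)$) identifies each image $\xi(\partial^l\varphi_{s';s})$ of a first- or second-order transition-function derivative with a transition-function derivative $\partial^l\phi_{s';s}$ of charts in the ambient atlas $\mathcal{A}$. Consequently $\xi((f_{s';s})^c_{ab})$ retains the exact algebraic shape of a change-of-coordinate expression, the transformation law for $\nabla$ is transported intact, and diagram (\ref{diagram_star}) places each summand $\xi(\psi_{s'})\,\xi((f_{s';s})^c_{ab})$ in the appropriate intersection space, exactly as in the proof of Theorem \ref{thm_existence_connections}. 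I expect this double compatibility of $\xi$ --- with both the algebraic assembly of the coefficients and the transformation law --- to be the main obstacle, and it is precisely here that degree $r\ge 2$ is indispensable, since (\ref{change_coordinates_connection-1}) carries transition-function derivatives up to order two.

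Finally I would invoke Proposition \ref{prop_a',b'} (equivalently, the Regularity Globalization Lemma \ref{key_lemma}): for every $z\in[\beta';j]_k$ and every pair $(\theta,\vartheta)$ ordinary in $\Gamma_k[z]$, each element of (\ref{B_a'_C_b'}) is a $(B,k,\theta,\vartheta|\Gamma_k[z])$-function in $\mathbb{Y}_{\theta,\vartheta}$, which controls the derivatives $\partial^l$ of the coefficients for $l\in\Gamma_k[z]$. Since this holds in every chart of $\mathcal{B}^k_{\alpha,\beta}(\mathbb{X})$, the connection $\overline{\nabla}$ is the sought $(B^k_{\theta,\vartheta},\mathbb{Y}_{\theta,\vartheta}|\Gamma_k[z])$-connection, completing the argument.
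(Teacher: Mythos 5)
Your proposal is correct and follows essentially the same route as the paper: transport the coefficients of the connection from Theorem \ref{thm_existence_connections} by $\xi^{\alpha',\beta'}_{\theta,\vartheta}$, use distributivity and the degree $r\geq 2$ hypothesis to show the transported coefficients still satisfy the Christoffel transformation law (with respect to charts $\phi_s\in\mathcal{A}$), use Lemma \ref{lemma_preservation_partition_unity} to re-glue via the transported partition of unity, and invoke Proposition \ref{prop_a',b'} for the derivative regularity. The only cosmetic difference is the order in which Proposition \ref{prop_a',b'} is invoked; all the essential ingredients and their roles match the paper's argument.
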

\begin{proof}
From Theorem \ref{thm_existence_connections} there exists an affine
connection $\nabla$ whose coefficients (\ref{_partition_unity-1})
belong to (\ref{B_a'_C_b'}). By the first part of the hypotheses we can apply Proposition \ref{prop_a',b'}, so that for any
pair $(\theta,\vartheta)$ ordinary in $\Gamma_k[z]$, the coefficients $(\Gamma_s)^c_{ab}$ in  (\ref{_partition_unity-1}) can be regarded as $(B,k,\theta,\vartheta,\mathbb{Y}_{\theta,\vartheta}|\Gamma_k[z])$-functions. But, from the proof of Lemma \ref{key_lemma} we see that this way to regard elements of an intersection space as regular functions is made by taking their image under $\xi^{\alpha',\beta'}_{\theta,\vartheta}$. Thus, Proposition \ref{prop_a',b'} is telling us precisely that, for every $\varphi_s$, the corresponding $\xi^{\alpha',\beta'}_{\theta,\vartheta}((\Gamma_s)^c_{ab})$ are $(B,k,\theta,\vartheta,\mathbb{Y}_{\theta,\vartheta}|\Gamma_k[z])$-functions. We will show that, under the additional hypotheses, the collection of these when varying $s$ also defines an affine connection in $M$, which will be a $(B_{\theta,\vartheta}^{k},\mathbb{Y}_{\theta,\vartheta}|\Gamma_k[z])$-connection by construction. Notice that the expression (\ref{change_coordinates_connection-1}) which describes how $(\Gamma_s)^c_{ab}$ changes when $s$ varies involves multiplications, sums, bump functions and derivatives of the transition functions. But the additional hypotheses are precisely about the preservation of these data by $\xi$! More precisely, for $i \leq \vartheta(i)$ we have:
\begin{eqnarray*}
\xi^{\alpha',\beta'}_{\theta\vartheta}((f_{s';s})_{ab}^{c}) & = & \xi^{\alpha',\beta'}_{\theta\vartheta}\Bigl(\sum_{l,m,o}\frac{\partial\varphi_{s}^{c}}{\partial\varphi_{s'}^{l}}\frac{\partial\varphi_{s'}^{m}}{\partial\varphi_{s}^{a}}\frac{\partial\varphi_{s'}^{o}}{\partial\varphi_{s}^{b}}(f_{s'})_{mo}^{l}+\sum_{l}\frac{\partial^{2}\varphi_{s'}^{l}}{\partial\varphi_{s}^{a}\partial\varphi_{s}^{b}}\frac{\partial\varphi_{s}^{c}}{\partial\varphi_{s'}^{l}}\Bigr)\\
 & = & \sum_{l,m,o}\xi_{\theta\vartheta}\Bigl(\frac{\partial\varphi_{s}^{c}}{\partial\varphi_{s'}^{l}}\Bigr)\xi_{\theta\vartheta}\Bigl(\frac{\partial\varphi_{s'}^{m}}{\partial\varphi_{s}^{a}}\Bigr)\xi_{\theta\vartheta}\Bigl(\frac{\partial\varphi_{s'}^{o}}{\partial\varphi_{s}^{b}}\Bigr)\xi_{\theta\vartheta}((f_{s'})_{mo}^{l})+ \\
 & & + \sum_{l}\xi_{\theta\vartheta}\Bigl(\frac{\partial^{2}\varphi_{s'}^{l}}{\partial\varphi_{s}^{a}\partial\varphi_{s}^{b}}\Bigr)\xi_{\theta\vartheta}\Bigl(\frac{\partial\varphi_{s}^{c}}{\partial\varphi_{s'}^{l}}\Bigr)\\
 & = & \sum_{l,m,o}\frac{\partial\phi_{s}^{c}}{\partial\phi_{s'}^{l}}\frac{\partial\phi_{s'}^{m}}{\partial\phi_{s}^{a}}\frac{\partial\phi_{s'}^{o}}{\partial\phi_{s}^{b}}\xi^{\alpha',\beta'}_{\theta\vartheta}((f_{s'})_{mo}^{l})+\sum_{l}\frac{\partial^{2}\phi_{s'}^{l}}{\partial\phi_{s}^{a}\partial\phi_{s}^{b}}\frac{\partial\phi_{s}^{c}}{\partial\phi_{s'}^{l}},
\end{eqnarray*}
where we omitted the superior indexes in the maps $\xi$ to  simplify the notation and $\phi_s$ are charts in the maximal $C^k$-atlas $\mathcal{A}$ of $M$.  Furthermore, in the first step we used (\ref{change_coordinates_connection-1}) and
in the second one we used compatibility between the distributive structures
of $B$ and $C^{k-}$ and also the distributive properties of $\xi$ (which comes from the fact that the $(\alpha',\beta';j)$-connection is supposed distributive)  and the composition laws (\ref{composition_law_connections}). In the third step we used that the $(\alpha',\beta';j)$-connection has degree $r\geq2$ (here is where we need $i\leq \vartheta(i)$). By the same kind of arguments,
\begin{eqnarray}
\xi^{\alpha',\beta'}_{\theta,\vartheta}((\Gamma_{s})_{ab}^{c}) & = & \xi^{\alpha',\beta'}_{\theta\vartheta}\Bigl(\sum_{s'\in N(s)}\psi_{s'}\cdot(f_{s';s})_{ab}^{c}\Bigr)\\
 & = & \sum_{s'\in N(s)}\xi_{\theta,\vartheta}^{\alpha',\beta'}(\psi_{s'}) \Bigl(\sum_{l,m,o}\frac{\partial\phi_{s}^{c}}{\partial\phi_{s'}^{l}}\frac{\partial\phi_{s'}^{m}}{\partial\phi_{s}^{a}}\frac{\partial\phi_{s'}^{o}}{\partial\phi_{s}^{b}}\xi^{\alpha',\beta'}_{\theta\vartheta}((f_{s'})_{mo}^{l})+ \\
  & &  \quad \quad \quad \quad \quad \quad  \quad \quad \quad \quad   \quad \quad \quad + \sum_{l}\frac{\partial^{2}\phi_{s'}^{l}}{\partial\phi_{s}^{a}\partial\phi_{s}^{b}}\frac{\partial\phi_{s}^{c}}{\partial\phi_{s'}^{l}}\Bigr) \\
 & = & \sum_{s'\in N(s)}\xi_{\theta,\vartheta}^{\alpha',\beta'}(\psi_{s'})((\Gamma_{s'})_{\theta,\vartheta})^c_{ab} \label{globalization_regular_connection},
\end{eqnarray}
where $((\Gamma_{s'})_{\theta,\vartheta})^c_{ab}$ are the coefficients of the affine connection in $U_{s'}$ defined by the family of functions $\xi^{\alpha',\beta'}_{\theta\vartheta}((f_{s'})_{mo}^{l})$. Since the $(\alpha',\beta';j)$-connection in $B$ is nice and distributive it follows from Lemma \ref{lemma_preservation_partition_unity} that $(\xi^{\alpha',\beta'}_{\theta,\vartheta}(\psi_s))_s$ is a $C^k$-partition of unity for $M$. Thus, (\ref{globalization_regular_connection}) is a globally defined affine connection in $M$, which by construction is a $(B^k_{\theta,\vartheta},\mathbb{Y}_{\theta,\vartheta}|\Gamma_k[z])$-connection.
\end{proof}

\section{Multiplicity \label{sec_multiplicity}}

The last result was about the existence of regular connections. In the following we will consider the multiplicity problem. More precisely, given a $B^k_{\alpha,\beta}$-manifold $M$ in an ISP $\mathbb{X}$, $S\subset \Gamma_k$, functions $\theta:S \rightarrow \Gamma$ and $\vartheta:S\rightarrow [2,k]$, and a concrete ISP $\mathbb{Y}$ compatible with $\mathbb{X}$, we will study the set $\operatorname{Conn}^k_{\theta,\vartheta}(M;\mathbb{Y}|S)$ of $(B_{\theta,\vartheta}^{k},\mathbb{Y}_{\theta,\vartheta}|S)$-connections in $M$. 

\begin{itemize}
    \item Let $(\mathcal{B}_{\operatorname{end}})^k_{\alpha,\beta}(\mathbb{X})\subset \mathcal{B}^k_{\alpha,\beta}(\mathbb{X})$ be the collection of all charts $(\varphi,U)\in \mathcal{B}^k_{\alpha,\beta}(\mathbb{X})$ which trivializes the endomorphism bundle $\operatorname{End}(TM)$.  
\end{itemize} 
\begin{definition}
Let $S\subset \Gamma_k$, $\theta:S\rightarrow \Gamma$, $\vartheta:S\rightarrow [2,k]$ and $\mathbb{Y}$ as above. An $\operatorname{End}(TM)$-valued $(B^k_{\theta,\vartheta},\mathbb{Y})$-form of degree one is an $\operatorname{End}(TM)$-valued $1$-form $\omega$ of class $C^k$ whose coefficients $(\omega_\varphi)^c_{ab}$ in every $(\varphi,U)\in (\mathcal{B}_{\operatorname{end}})^k_{\alpha,\beta}(\mathbb{X})$ are $(B,k,\theta,\vartheta|S)$-functions in $\mathbb{Y}$. Let 
\begin{equation}\label{regular_1_forms}
\Omega^1_{\theta,\vartheta}(M;\operatorname{End}(TM),\mathbb{Y}|S)    
\end{equation} denote the set of all of them.
\end{definition}

We begin with an easy fact: in the same way as the set of affine connections in a $C^k$-manifold is an affine space of the vector space of $\operatorname{End}(TM)$-valued 1-forms, the set of regular affine connections on a regular manifold is an affine space of the vector space of regular  $\operatorname{End}(TM)$-valued 1-forms. More precisely,
\begin{proposition}\label{space_affine_connections_is_affine}
Let $M$ be a $(B^k_{\alpha,\beta},\mathbb{X})$-manifold. For every $S$, $\theta$, $\vartheta$ and $\mathbb{Y}$ as above:
\begin{enumerate}
     \item \emph{(\ref{regular_1_forms})} is a vector subspace of $\Omega^1(M;\operatorname{End}(TM))$;
    \item $\operatorname{Conn}^k_{\theta,\vartheta}(M;\mathbb{Y}|S)$ is an affine space of \emph{(\ref{regular_1_forms})}.
\end{enumerate}
 
\end{proposition}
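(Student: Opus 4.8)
The plan is to verify that the classical affine structure on the space of connections — the fact that the difference of two connections is an $\operatorname{End}(TM)$-valued $1$-form and that a connection plus such a form is again a connection — survives the imposition of the regularity conditions defining $(B,k,\theta,\vartheta|S)$-functions. The argument splits according to the two items, and in both cases the work is local: everything is checked chart by chart on the coefficient functions. A preliminary observation streamlines this: over any coordinate chart $(\varphi,U)$ the bundle $\operatorname{End}(TU)$ is trivialized by the coordinate endomorphisms, since $TU$ is framed by the coordinate vector fields. Hence every chart of $\mathcal{B}^k_{\alpha,\beta}(\mathbb{X})$ already trivializes $\operatorname{End}(TM)$, so $(\mathcal{B}_{\operatorname{end}})^k_{\alpha,\beta}(\mathbb{X})=\mathcal{B}^k_{\alpha,\beta}(\mathbb{X})$ and the charts used to test regularity of forms and of connections coincide.

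For item (1), I would first show that the defining condition of a $(B,k,\theta,\vartheta|S)$-function is \emph{linear} in $f$. Fix $(\varphi,U)$ and $i\in S$. Because $\mathbb{Y}$ is a concrete ISP, Lemma \ref{lemma_monomorphisms} provides a monomorphism $\theta_{2,U}$ from the concrete intersection $B_{\theta(i)}(U)\cap_{Y(U)}C^{k-\vartheta(i)}(U)$ into $C^{k-\vartheta(i)}(U)$; as a morphism of nuclear Fréchet spaces it is an injective continuous linear map, so its image is a linear subspace of $C^{k-\vartheta(i)}(U)$. The requirement that $\partial^{i}f$ lie in this image is thus the preimage, under the linear operator $\partial^{i}$, of a linear subspace, hence a linear subspace of $C^{k}(U)$; intersecting over $i\in S$ preserves linearity. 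Consequently sums and scalar multiples of $(B,k,\theta,\vartheta|S)$-functions are again such, and applying this coefficientwise to the $(\omega_\varphi)^c_{ab}$ shows that (\ref{regular_1_forms}) is closed under the vector operations of $\Omega^1(M;\operatorname{End}(TM))$, i.e. is a vector subspace.

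For item (2), I would invoke the classical local picture: in a chart the connection coefficients transform by (\ref{change_coordinates_connection-1}), whose second, inhomogeneous term does not depend on the connection. Thus for $\nabla_1,\nabla_2$ the coefficientwise difference $(\Gamma_1)^c_{ab}-(\Gamma_2)^c_{ab}$ transforms tensorially and defines a genuine $\operatorname{End}(TM)$-valued $1$-form $\nabla_1-\nabla_2$, while $\nabla+\omega$ (defined coefficientwise) is again a connection. It then remains to confirm that these operations preserve regularity: if $\nabla_1,\nabla_2\in\operatorname{Conn}^k_{\theta,\vartheta}(M;\mathbb{Y}|S)$, their local coefficients are $(B,k,\theta,\vartheta|S)$-functions in every $(\varphi,U)$, so by item (1) their differences are too, giving $\nabla_1-\nabla_2\in$ (\ref{regular_1_forms}); and if $\omega\in$ (\ref{regular_1_forms}) and $\nabla\in\operatorname{Conn}^k_{\theta,\vartheta}(M;\mathbb{Y}|S)$, the coefficients of $\nabla+\omega$ are sums of $(B,k,\theta,\vartheta|S)$-functions, hence again such by item (1). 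The affine-space axioms — $\nabla+0=\nabla$, $(\nabla+\omega_1)+\omega_2=\nabla+(\omega_1+\omega_2)$, and the existence and uniqueness of the form $\nabla_2-\nabla_1$ with $\nabla_1+(\nabla_2-\nabla_1)=\nabla_2$ — are then inherited verbatim from the classical unconstrained structure, since every identity holds coefficientwise; when $\operatorname{Conn}^k_{\theta,\vartheta}(M;\mathbb{Y}|S)$ is nonempty (e.g. under the hypotheses of Theorem \ref{corollary_existence_connections}) this exhibits it as an affine space modeled on (\ref{regular_1_forms}).

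The main obstacle — and the only point where the regularity formalism genuinely intervenes — is the linearity claim underlying item (1): one must be certain that membership of $\partial^i f$ in the abstract intersection $B_{\theta(i)}\cap_{Y}C^{k-\vartheta(i)}$ really is a linear condition, i.e. that the concretized intersection sits inside $C^{k-\vartheta(i)}(U)$ as an honest linear subspace closed under the ambient addition rather than under some reindexed or twisted operation. This is precisely what concreteness of $\mathbb{Y}$ secures, through Lemma \ref{lemma_monomorphisms} together with the linearity of morphisms in $\mathbf{NFre}$. Once this is granted, the tensoriality of the difference (classical, via the cancellation of the inhomogeneous term in (\ref{change_coordinates_connection-1})) and the affine axioms are a faithful transcription of the classical argument, carried out coefficientwise.
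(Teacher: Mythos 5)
Your proof is correct and follows essentially the same route as the paper's: both reduce the statement to closure of the coefficientwise regularity condition under addition and then transplant the classical affine structure on the space of connections (difference of connections is tensorial, connection plus form is a connection). The only real difference is in how that closure is justified --- the paper invokes the axiom $+_{ii}=+_i$ of the distributive structure on $B$, whereas you derive linearity from the concreteness of $\mathbb{Y}$ via the monomorphism $\theta_{2,U}$ of Lemma \ref{lemma_monomorphisms}; both are valid, and your explicit observations that every coordinate chart already trivializes $\operatorname{End}(TM)$ and that nonemptiness is needed for the affine-space conclusion are details the paper leaves implicit.
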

\begin{proof}
For the first one, recall that in
a distributive $\Gamma$-space $B=(B_{i})$ the additive structure
$+_{ij}:B_{i}\otimes B_{j}\rightarrow B_{\delta(i,j)}$ is such that
$\delta(i,i)=i$ and $+_{ii}$ is the sum of $B_{i}$. For the second one, if $\nabla$ and $\overline{\nabla}$ are two affine $C^k$-connections
in $M$, then $\omega=\nabla-\overline{\nabla}$ has coefficients in $(\varphi,U)\in (\mathcal{B}_{\operatorname{end}})^k_{\alpha,\beta}(\mathbb{X})$ given by $(\omega_\varphi)_{ab}^{c}=(\Gamma_\varphi)_{ab}^{c}-(\overline{\Gamma}_\varphi)_{ab}^{c}$. Thus, again by $+_{ii}=+_i$, if $(\Gamma_\varphi)_{ab}^{c}$ and  $(\overline{\Gamma}_\varphi)_{ab}^{c}$ are $(B,k,\theta,\vartheta,\mathbb{Y}|S)$-functions, then $(\omega_\varphi)_{ab}^{c}$ are so.
\end{proof}

We will now prove that  $(B_{\theta\vartheta}^{k},\mathbb{Y}|S)$-connections $\nabla$ and $\overline{\nabla}$ which are ``locally additively different'' are actually
``locally different''.
\begin{definition}
Let $M$ be a $(B^{k}_{\alpha,\beta},\mathbb{X})$-manifold. Given $S\subset \Gamma_k$, $\theta$, $\delta$ and $\mathbb{Y}$ as above, a \textit{3-parameter $(\theta,\vartheta,\mathbb{Y}|S)$-family} in $M$ is just a collection $(\Omega_\varphi)^{c}_{ab}$ of $(B,k,\theta,\vartheta|S)$-functions in $\mathbb{Y}$, with $a,b,c=1,...,n$, for each $(\varphi,U)\in \mathcal{B}^k_{\alpha,\beta}(\mathbb{X})$.
\end{definition}
\begin{itemize}
    \item If $(\Omega_\varphi)^c_{ab}$ is a 3-parameter family, let
    $
    (\Omega_\varphi)^c= \sum_{a,b}(\overline{\Omega}_\varphi)^{c}_{ab}(p)
    $.
\end{itemize}
\begin{definition}
Two 3-parameter $(\theta,\vartheta,\mathbb{Y}|S)$-families $(\Omega_\varphi)^{c}_{ab}$ and $(\overline{\Omega}_\varphi)^{c}_{ab}$ are:
\begin{enumerate}
    \item \textit{locally additively different} if for every chart $(\varphi,U) \in \mathcal{B}^k_{\alpha,\beta}(\mathbb{X})$ and every $c=1,...,n$ we have $(\Omega_\varphi)^c\neq (\overline{\Omega}_\varphi)^c$;
    \item \textit{locally different} if for every chart $(\varphi,U) \in \mathcal{B}^k_{\alpha,\beta}(\mathbb{X})$ and every $a,b,c=1,...,n$ there exists a nonempty open set $U_{a,b,c}\subset U$ such that $(\Omega_\varphi)^c_{ab}(p)\neq (\overline{\Omega}_\varphi)^c_{ab}(p)$ for every $p\in U_{a,b,c}$.
    
\end{enumerate}

\end{definition}
\begin{definition}
Two $(B^k_{\theta,\vartheta},\mathbb{Y}|S)$-connections $\nabla$ and $\overline{\nabla}$ in $M$ are \textit{locally additively different} (resp. \textit{locally different}) if the corresponding  3-parameter $(\theta,\vartheta,\mathbb{Y}|S)$- families of their coefficients are locally additively different (resp. locally different).
\end{definition}
\begin{theorem}\label{multiplicity_thm}
If two 3-parameter $(\theta,\vartheta,\mathbb{Y}|S)$- families $(\Omega_\varphi)^{c}_{ab}$ and $(\overline{\Omega}_\varphi)^{c}_{ab}$ in a smooth $(B^{\infty}_{\alpha,\beta},\mathbb{X})$-manifold are locally additively different, then they are locally different.
\end{theorem}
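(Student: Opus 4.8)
I would argue by contraposition, trading the statement for a claim about the difference functions and then exploiting the abundance of regular charts available on a smooth ($k=\infty$) manifold. First, pass to the differences $\Delta^{c}_{ab}:=(\Omega_\varphi)^{c}_{ab}-(\overline{\Omega}_\varphi)^{c}_{ab}$, which are smooth because $k=\infty$. In this notation the hypothesis reads: for every regular chart $(\varphi,U)\in\mathcal{B}^k_{\alpha,\beta}(\mathbb{X})$ and every $c$, the contraction $\sum_{a,b}\Delta^{c}_{ab}$ is not identically zero; the desired conclusion reads: for every regular chart and every $a,b,c$, the single function $\Delta^{c}_{ab}$ is not identically zero. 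The passage from ``nonzero on a nonempty open set'' to ``not identically zero'' is immediate from continuity, since $\Delta^{c}_{ab}(p_0)\neq 0$ forces $\Delta^{c}_{ab}\neq 0$ on a neighbourhood $U_{a,b,c}$ of $p_0$. Hence it suffices to show that if $\Delta^{c_0}_{a_0b_0}\equiv 0$ on some regular chart, then some contraction $\sum_{a,b}\Delta^{c'}_{ab}$ is identically zero on some regular chart.

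Next I would use that the families relevant here transform tensorially, as the coefficient families of an $\operatorname{End}(TM)$-valued $1$-form (this is exactly the structure isolated in Proposition \ref{space_affine_connections_is_affine}). Writing $J=\partial\varphi/\partial\phi$ for the Jacobian of a change of regular chart $\varphi\mapsto\phi$, the components in $\phi$ are
\[
\widetilde{\Delta}^{\,c'}_{a'b'}=\sum_{c,a,b}(J^{-1})^{c'}_{c}\,J^{a}_{a'}\,J^{b}_{b'}\,\Delta^{c}_{ab}.
\]
Summing over $a'$ and $b'$ merges the two lower Jacobians into a single vector $v^{a}=\sum_{a'}J^{a}_{a'}$, so that the additive sum transforms as
\[
\sum_{a',b'}\widetilde{\Delta}^{\,c'}_{a'b'}=\sum_{c}(J^{-1})^{c'}_{c}\,\Bigl(\sum_{a,b}v^{a}v^{b}\,\Delta^{c}_{ab}\Bigr).
\]
Because the regular atlas is closed under restriction and, in the smooth case, contains charts realizing any prescribed Jacobian near a point, I may treat $v$ and the row $(J^{-1})^{c'}_{\bullet}$ as free data.

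The case $c_0=a_0=b_0$ then closes cleanly: choose $J$ with $v=e_{a_0}$ (equivalently $J\mathbf 1=e_{a_0}$, so the $a_0$-column of $J^{-1}$ is $\mathbf 1$) and take the $c'$-row of $J^{-1}$ to have its only nonzero entry in position $a_0$. The right-hand side above collapses to $\Delta^{a_0}_{a_0a_0}=\Delta^{c_0}_{a_0b_0}\equiv 0$, so the additive sum for index $c'$ vanishes identically in the chart $\phi$, contradicting local additive difference.

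The hard part will be the general index pattern. The contraction $\sum_{a,b}v^{a}v^{b}\Delta^{c}_{ab}$ only ever detects the symmetric part $\Delta^{c}_{(ab)}$, so no single constant-Jacobian frame change can isolate an off-diagonal (let alone skew) entry, and the clean cancellation above is special to $c_0=a_0=b_0$. To propagate the vanishing of a single entry $\Delta^{c_0}_{a_0b_0}$ into the vanishing of an entire contraction in the remaining cases, I expect to need genuinely position-dependent coordinate changes — prescribing the Jacobian field of a local diffeomorphism along an open set, which is possible precisely because $k=\infty$ — together with the density coming from restriction-closure, rather than a single linear substitution. Making this localization precise, and verifying that the diffeomorphisms produced actually lie in the regular atlas, is the step I expect to demand the most care, and it is exactly where the smoothness hypothesis is indispensable.
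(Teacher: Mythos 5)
Your argument has a genuine gap, and in fact rests on a premise that the statement does not supply. The theorem is about 3-parameter $(\theta,\vartheta,\mathbb{Y}|S)$-families, which by definition are nothing more than arbitrary chart-indexed collections of regular functions $(\Omega_\varphi)^c_{ab}$: no transformation law between charts is imposed. Your entire mechanism --- transporting the identical vanishing of one entry $\Delta^{c_0}_{a_0b_0}$ in one chart into the identical vanishing of a full contraction in another chart via Jacobians --- therefore has nothing to act on: the components attached to a chart $\phi$ are simply whatever the family assigns to $\phi$, not the tensorially transformed components from $\varphi$. Even if you restricted attention to families that do arise from $\operatorname{End}(TM)$-valued $1$-forms, you would still need the regular atlas $\mathcal{B}^{\infty}_{\alpha,\beta}(\mathbb{X})$ to contain charts realizing a prescribed (constant, let alone position-dependent) Jacobian near a point; the atlas is only assumed closed under restrictions, and composing a regular chart with an arbitrary diffeomorphism will in general destroy the $(B,\infty,\alpha,\beta)$-regularity of the transition functions, so that freedom is not available. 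Finally, you close only the case $c_0=a_0=b_0$ and explicitly defer the general index pattern, which is exactly where the content of the theorem lies; as written the proposal is an announced strategy rather than a proof.

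The paper's own proof never changes charts. It works entirely inside a single chart: on the nonempty open set $U_c$ where the contraction is nonzero, the triangle inequality $0<\vert\sum_{a,b}\Delta^c_{ab}(p)\vert\leq\sum_{a,b}\vert\Delta^c_{ab}(p)\vert$ produces selection functions $a_c,b_c:U_c\rightarrow[n]$ with $\Delta^c_{a_c(p)b_c(p)}(p)\neq 0$, and a transversality/openness argument on these $[n]$-valued selections is then used to pin the indices down to constants on smaller nonempty open sets. If you want to salvage your write-up, the within-chart route is the one to pursue; the chart-change route cannot be made to work for bare 3-parameter families.
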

\begin{proof}
 For every chart $(\varphi,U)$ and every $c=1,..,n$, let $U_c\subset U$ be the subset in which $(\Omega_{\varphi})^c(p)\neq (\overline{\Omega}_\varphi)^c(p)$.  Since the 3-parameters are locally additively different, the subsets $U_c$ are nonempty. On the other hand, they are the complement of the closed sets $[(\Omega_{\varphi})^c-(\overline{\Omega}_\varphi)^c]^{-1}(0)$. Thus, they are nonempty of sets. For each $p\in U_c$ we then have
\[
0<\vert\Omega_{\varphi}^{c}(p) - (\overline{\Omega}_\varphi)^c(p) \vert\leq\sum_{a,b}\vert(\Omega_{\varphi}){}_{ab}^{c}(p)-(\overline{\Omega}_{\varphi}){}_{ab}^{c}(p)\vert,
\]
implying the existence of functions $a_c,b_c:U_c \rightarrow[n]$
such that 
\[
(\Omega_{\varphi}){}_{a_c(p)b_c(p)}^{c}(p)\neq(\overline{\Omega}_{\varphi}){}_{a_c(p)b_c(p)}^{c}(p).
\]
Regard $[n]$ as a $0$-manifold and notice that if two smooth functions
$f,g:V\rightarrow[n]$ are transversal, then they coincide in an open
set $V'\subset V$. Since $[n]$ is a $0$-manifold, any smooth map
$f:V\rightarrow[n]$ is a submersion and therefore transversal to
each other. In particular, $a_c$ (resp. $b_c$)
is transversal to each constant map $a'\in[n]$ (resp. $b'$), so
that for each $a'$ (resp. $b'$) there exists $U_{a',c}\subset U_c$
(resp. $U_{b',c}\subset U_c$) in which 
\[
(\Omega_{\varphi}){}_{a'b_c(p)}^{c}(p)\neq(\overline{\Omega}_{\varphi}){}_{a'b_c(p)}^{c}(p)\quad(\text{resp.}\quad(\Omega_{\varphi}){}_{a_c(p)b'}^{c}(p)\neq(\overline{\Omega}_{\varphi}){}_{a_c(p)b'}^{c}(p))
\]
Taking $U_{a',b',c}=U_{a',c}\cap U_{b',c}$ and noticing that this intersection
is nonempty, the proof is done.
\end{proof}
\subsection{Discussion}\label{sec_discussion}
We close with a broad discussion which we would like to see formalized and enlarged in some future work.
\begin{itemize}
    \item Let $\mathcal{C}_k(\theta,\vartheta;\mathbb{Y})$ denote the set of 3-parameter $(\theta,\vartheta,\mathbb{Y})$-families in some  $(B^k_{\alpha,\beta},\mathbb{X})$-manifold $M$. Let $\xi$ be a nice and distributive $(\alpha',\beta';j)$-connection in $M$ with degree $r\geq0$.
\end{itemize}

The construction of Section \ref{sec_a_b_connections} shows that every 3-parameter $(\alpha_0,\beta_0,\mathbb{X})$-family $f=(f_\varphi)^c_{ab}$ in $M$  induces, for each ordinary pair $(\theta,\vartheta)$, a corresponding  $(B^k_{\theta,\vartheta},\mathbb{Y}_{\theta,\vartheta})$-connection $\nabla^{f}_{\xi}$ in $M$, whose 3-parameter family of coefficients $(\Gamma^f_{\xi, \varphi})^c_{ab}$ is given by (\ref{globalization_regular_connection}). Suppose, for a moment, that we found a class $\mathcal{S}\subset \mathcal{C}^k_3(\alpha_0,\beta_0;\mathbb{X})$ of 3-parameter $(\alpha_0,\beta_0,\mathbb{X})$-families such that for every $f\in \mathcal{S}$ the corresponding regular connection $\nabla^f_{\xi}$ has coefficients $(\Gamma^f_{\xi,\varphi})^c_{ab} \in \mathcal{C}_k(\theta,\vartheta;\mathbb{Y}) $ which are locally additively different to each other $\Omega_\varphi\in \mathcal{C}_k(\theta,\vartheta;\mathbb{Y})$. Thus, by Theorem \ref{multiplicity_thm} it follows that they are also locally different. In particular, \textit{every 3-parameter $(\theta,\vartheta,\mathbb{Y})$-family is locally additively different to some $(B^k_{\theta,\vartheta},\mathbb{Y})$-connection and this ``difference'' is measured by elements in $\mathcal{S}$.} This is the typical shape of denseness theorems. More precisely, one could expect the existence of a topology in $\mathcal{C}_k(\theta,\vartheta;\mathbb{Y})$ whose basic open sets are parameterized by elements of $\mathcal{S}$ and such that $\operatorname{Conn}^k_{\theta,\vartheta}(M;\mathbb{Y})\subset \mathcal{C}_k(\theta,\vartheta;\mathbb{Y})$ is a dense subset.

Actually, the existence of $\mathcal{S}$ as above also implies that \textit{if $f\in\mathcal{S}$, then $\nabla^f _{\xi}$ is locally additively different to each other $\nabla\in\operatorname{Conn}^k_{\theta,\vartheta}(M;\mathbb{Y})$}, which is the shape of an openness theorem. Thus, we could expect that, if the topology described above exists, then $\operatorname{Conn}^k_{\theta,\vartheta}(M;\mathbb{Y})\subset \mathcal{C}_k(\theta,\vartheta;\mathbb{Y})$ should be open and nonempty due Theorem \ref{corollary_existence_connections}. 

Finally, looking again at the expressions  (\ref{change_coordinates_connection-1}) and (\ref{globalization_regular_connection}) defining $\nabla ^f_{\xi}$, one concludes that the problem of finding $\mathcal{S}$ consists in studying the complement in $\mathcal{C}_k(\theta,\vartheta;\mathbb{Y})$ of the solution spaces $\mathcal{X}_{\xi}(\varphi,\Omega)$, for each $\varphi\in \mathcal{B}^{k}_{\alpha,\beta}(\mathbb{X})$ and each $\Omega\in \mathcal{C}_k(\theta,\vartheta;\mathbb{Y})$, of the system of nonhomogeneous equations 

\begin{equation}\label{final_equation}
 F^c_{\xi}(f;\varphi,\Omega)=(\Omega_\varphi)^c,\quad \text{where}\quad F^c_{\xi}(f;\varphi,\Omega)= \sum_{a,b} \xi ^{\alpha',\beta'}_{\theta,\vartheta}((\Gamma ^f_{\xi,\varphi})^c_{ab}),   
\end{equation} 
whose coefficients belong to the space of $(B,k,\theta,\vartheta|\Gamma_k[z])$-functions. Since  equations (\ref{final_equation}) strongly depend on the $(\alpha',\beta';j)$-connection $\xi$, the basic strategy to find $\mathcal{S}$ (and then to get the openness and denseness results described above) would be to introduce additional conditions in $\xi$ which simplify (\ref{final_equation}). In other words, would be to look at (\ref{final_equation}) as parameterized systems of equations in the space of all $(\alpha',\beta';j)$-connections.

\section*{Acknowledgments}

The authors thank the reviewer's valuable considerations, which undoubtedly made the text much more readable. The first author was supported by CAPES (grant number 88887.187703/2018-00).

\end{document}